\definecolor{db}{RGB}{23,20,219}
\definecolor{dg}{RGB}{2,101,15}
\colorlet{sectitlecolor}{red!60!black}
\colorlet{sectboxcolor}{cyan!30}
\colorlet{secnumcolor}{orange}
\sffamily\color{sectitlecolor}\Large\bfseries\filcenter}{}{2em}{\thesection.\quad #1}%
\newtheoremstyle{mytheorem}{5pt}{}{\color{db}}{}{\color{db}\bfseries}{}{ }{}
\theoremstyle{mytheorem}
\newtheorem{theorem}{Theorem}[section]
\newtheorem{corollary}[theorem]{Corollary}
\newtheorem{proposition}[theorem]{Proposition}
\theoremstyle{definition}
\theoremstyle{example}
\theoremstyle{remark}
\newtheorem{remark}[theorem]{Remark}
\numberwithin{equation}{section}
\def\QSym{{\rm QSym}}%
\def\HQ{{\mathfrak Q}}%
 \DeclareSymbolFont{Shuffle}{U}{shuf}{m}{n}
\DeclareFontFamily{U}{shuf}{}
\DeclareFontShape{U}{shuf}{m}{n}{%
  <-8>shuffle7%
  <8->shuffle10%
}{}
\DeclareMathSymbol\shuf{\mathbin}{Shuffle}{"001}
\DeclareMathSymbol\cshuf{\mathbin}{Shuffle}{"002}
\newcommand{\Rowt}[3]{%
  \noindent
  \begin{tabular}{@{}p{0.05\linewidth}p{0.41\linewidth}p{0.47\linewidth}@{}}%
    \textbf{#1} & {\footnotesize #2} & {\footnotesize #3}\\%
  \end{tabular}%
  \par\vspace{2pt}%
}
\newcommand{\TableHeadert}{%
  \noindent
  \begin{tabular}{@{}p{0.05\linewidth}p{0.41\linewidth}p{0.47\linewidth}@{}}%
    \textbf{$n$} & \textbf{\scriptsize Composition} & \textbf{\scriptsize Value}\\%
    \midrule
  \end{tabular}%
  \par\vspace{4pt}%
}
\renewcommand{\Row}[3]{%
  \noindent
  \begin{tabular}{@{}p{0.05\linewidth}p{0.26\linewidth}p{0.63\linewidth}@{}}%
    \textbf{#1} & {\footnotesize #2} & #3\\%
  \end{tabular}%
  \par\vspace{2pt}%
}
\newcommand{\TableHeader}{%
  \noindent
  \begin{tabular}{@{}p{0.05\linewidth}p{0.26\linewidth}p{0.63\linewidth}@{}}%
    \textbf{$n$} & \textbf{\scriptsize Composition} & \textbf{\scriptsize Value}\\%
    \midrule
  \end{tabular}%
  \par\vspace{4pt}%
}
\begin{document}
\title{Coincidence among sum formulas for zeta-like multiple values}
\author{Kwang-Wu Chen}%
\address{Department of Mathmeatics, University of Taipei, Taipei 100234, Taiwan}
\email{kwchen@uTaipei.edu.tw}%
\keywords{multiple zeta values, zeta-like series, multiple $\rho$-values, multiple $\eta$-values, weighted sum formulas}
\subjclass{Primary 11M32, 11B83, 05A19.}
\date{\today. \quad Version:490. \quad {\it This project was initiated on} July 19, 2024} 
\begin{abstract}
We study two families of zeta-like multiple series—the multiple $\rho$-values and the multiple $\eta$-values—
defined by nested sums with shifted denominators.
An explicit factorial formula for $\rho$ reveals its intrinsic combinatorial structure
and leads to closed expressions for fixed weight and depth.
A remarkable identity
\[
\sum_{\substack{a_1+\cdots+a_{q+1}=n\\ a_i\ge0}}
\frac{1}{\prod_{j=1}^{q}(a_j+a_{j+1}+\cdots+a_{q+1}+1)}=1,
\]
emerges from a weighted-sum transformation, exhibiting a perfect discrete balance.

The main theorem proves that
\[
\sum_{\substack{|\bm{s}|=q+r+1\\ \ell(\bm{s})=r+1}}\eta(\bm{s})
=\sum_{\substack{|\bm{\alpha}|=q+r+1\\ \ell(\bm{\alpha})=q+1}}\rho(\bm{\alpha}),
\]
showing that the total sums of $\rho$- and $\eta$-values coincide
for equal weight but complementary depths.
This correspondence provides an analytic basis for integral representations of $\eta$-values
and for deriving weighted sum relations.
Together, these results show that the $\rho$- and $\eta$-families
form two complementary realizations of a unified analytic–combinatorial structure,
bridging factorial and harmonic formulations in zeta-like multiple sums.
\end{abstract}

\maketitle

\section{Introduction}

For a given positive integer \(r\), we define the multiple zeta values (MZVs) by  
\[
\zeta(\alpha_1,\ldots,\alpha_r)
=\sum_{1\le n_1<n_2<\cdots<n_r}
\frac{1}{n_1^{\alpha_1}n_2^{\alpha_2}\cdots n_r^{\alpha_r}},
\]
where \(\alpha_1,\alpha_2,\ldots,\alpha_r\) are positive integers and 
\(\alpha_r \ge 2\) to ensure convergence.  
We refer to such convergent multi-indices 
\(\bm\alpha = (\alpha_1,\ldots,\alpha_r)\) 
as {admissible indices} (that is, indices for which the defining series converges).  
The quantity \( |\bm\alpha| = \alpha_1 + \alpha_2 + \cdots + \alpha_r \) 
is called the \textbf{weight}, 
and \(\ell(\bm\alpha) = r\) is called the \textbf{depth} of \(\zeta(\bm\alpha)\).
Throughout the paper, we denote by \(|\bm\alpha|\) and \(\ell(\bm\alpha)\) 
the weight and the depth of a multi-index \(\bm\alpha\), respectively.
We use the shorthand notation \(\{a\}^k\) for \(k\) repetitions of \(a\); 
for instance, \(\zeta(\{2\}^3,5) = \zeta(2,2,2,5)\).

Multiple zeta values (MZVs) occupy a central position in modern number theory and mathematical physics  
\cite{BroadhurstK1997, Drinfeld1990, Zagier1992}.  
Their algebraic and combinatorial structures—particularly the {sum formulas}
\[
  \sum_{|\bm\alpha| = m\atop \ell(\bm\alpha)=r} \zeta(\bm\alpha) = \zeta(m),
\]
where the summation is taken over all admissible indices 
\(\bm\alpha = (\alpha_1, \ldots, \alpha_r)\) with \(r \ge 1\) 
and \(m \ge 2\) being integers, together with the 
{duality} and {shuffle–stuffle relations}, 
have been extensively studied since the pioneering works of Hoffman, Ohno, and Zagier 
\cite{Granville1997, Hoffman1997, Ohno1999, ShenJ2017, Zagier1992}.
Various analytic generalizations, such as the Hurwitz, Tornheim, and
Mordell–Tornheim type series, have further enriched the landscape of
zeta-like multiple sums \cite{Chen2024, Mordell1958, Tornheim1950}.

\medskip
In this paper, we introduce two new families of ``zeta-like'' series.  
The first one, called the \textbf{multiple \(\eta\)-value}, is defined by
\begin{equation}\label{eq.def-eta}
  \eta(\alpha_1+1,\alpha_2+1,\ldots,\alpha_r+1)
  := \sum_{n=1}^\infty 
  \frac{1}{n^{\alpha_1+1}(n+1)^{\alpha_2+1}\cdots (n+r-1)^{\alpha_r+1}},
\end{equation}
where \(\alpha_1, \alpha_2, \ldots, \alpha_r\) are nonnegative integers 
and the total weight \( |\bm{\alpha}| + r \) is assumed to be greater than \(1\) 
to ensure convergence.  

The second one, referred to as the \textbf{multiple \(\rho\)-value} 
and denoted by \(\rho(\alpha_1+1,\alpha_2+1,\ldots,\alpha_r+1)\), is defined by
\begin{equation}\label{eq.def-rho}
  \sum_{1 \le n_1 < n_2 < \cdots < n_r}
  \frac{1}{
  (n_1)_{\alpha_1+1}\,
  (n_2+\alpha_1)_{\alpha_2+1}\,
  (n_3+\alpha_1+\alpha_2)_{\alpha_3+1}\cdots
  (n_r+\alpha_1+\cdots+\alpha_{r-1})_{\alpha_r+1}},
\end{equation}
where the rising factorial is defined by 
\((x)_0 = 1\) and \((x)_n = x(x+1)\cdots(x+n-1)\) for \(n \in \mathbb{N}\).  
Here, \(\alpha_r \ge 1\) is required to guarantee convergence.

\medskip
Although their definitions differ substantially in appearance,
both $\rho$ and $\eta$ share a remarkable common feature:
their total summations, when organized by weight, exhibit
unexpected coincidences.
Through an integral transformation,
certain $\eta$-sums can be expressed exactly in terms of $\rho$-sums
with matching total weight, revealing a deep combinatorial duality
between these two classes.

Our main theorem establishes the following identity, 
which reveals a deep correspondence between the two zeta-like families 
(see Theorem~\ref{thm:rho-eta-connection}):
\[
\sum_{\substack{|\bm{s}| = q + r + 1 \\[2pt] \ell(\bm{s}) = r + 1}}
\eta(\bm{s})
\;=\;
\sum_{\substack{|\bm{\alpha}| = q + r + 1 \\[2pt] \ell(\bm{\alpha}) = q + 1}}
\rho(\bm{\alpha}),
\]
where both summations are taken over all admissible indices.  
This fundamental relation forms one of the principal results of the paper, 
establishing the analytic correspondence between the 
$\rho$- and $\eta$-families.  
In addition to this connection, we also derive several independent properties 
of each family, including explicit formulas, combinatorial identities, 
and weighted sum relations.

\medskip

The principal findings of this paper may be summarized as follows:

\begin{itemize}

  \item[(i)] An explicit factorial formula for the $\rho$-values 
  (see Proposition~\ref{prop.rho-value}) is obtained, 
  giving a closed expression in terms of factorial products 
  and providing the foundation for subsequent summation formulas.

  \item[(ii)] A general sum relation connecting $\rho$-values with
  finite multiple zeta-star values (see Theorem~\ref{thm:rho-sum}):
  \[
  \sum_{|\bm s|=m}\rho(s_1+1,\ldots,s_r+2)
  =\frac{\zeta_{m+1}^\star(\{1\}^{r-1})}{(m+1)(m+1)!},
  \]
  where
  \[
  \zeta_{m+1}^\star(\{1\}^{r-1})
  =\sum_{1\le k_1\le k_2\le\cdots\le k_{r-1}\le m+1}
  \frac{1}{k_1k_2\cdots k_{r-1}}
  \]
  denotes the finite multiple zeta-star value.
  This identity reveals the factorial structure inherent in $\rho$ 
  and serves as a discrete analogue of multiple harmonic relations.

  \item[(iii)] Several combinatorial consequences and weighted sum identities 
  arise from the study of $\rho$-values, 
  including a remarkable combinatorial equality valid for all $q,n\ge0$ 
  (see Equation~\eqref{eq.qn-any}), 
  which emerges as a by-product of a weighted sum transformation of~$\rho$.

  \item[(iv)] For nonnegative integers $n$ and $q$, 
the sum of $\eta$-values admits the following integral representation:
  (see Corollary~\ref{cor.34}):
  \begin{equation}\label{eq.int}
  \sum_{|\bm{\alpha}| = q+1}
  \eta(\alpha_1+1, \ldots, \alpha_{n+1}+1)
  =\frac{1}{n!\,q!}
  \int_{0 < t_1 < t_2 < 1}
  \!\!\left(\log \frac{t_2}{t_1}\right)^q
  (1-t_1)^n
  \frac{dt_1\,dt_2}{(1-t_1)t_2}.
  \end{equation}
  This analytic expression illustrates a continuous analogue 
  of the discrete summation structure 
  and provides a powerful bridge between factorial and harmonic frameworks.
  
  \item[(v)] Further weighted sum relations for $\eta$-values 
  are derived through integral transformations (see Proposition~\ref{prop.35}),
  revealing how these zeta-like families share a unified 
  combinatorial–analytic structure.

\end{itemize}

\medskip

To save space, we shall write $\sum_{|\bm\alpha|=m}$ 
to denote $\sum_{\alpha_1+\cdots+\alpha_r=m\atop \alpha_1,\ldots,\alpha_r\ge0}$.
This convention will be used throughout the paper when specifying
the parameters of summations.

\medskip

Together, these results reveal that the $\rho$- and $\eta$-families 
represent two complementary manifestations of a single 
analytic–combinatorial structure.  
The factorial formulas and discrete sum relations of $\rho$ 
mirror the harmonic and integral representations of $\eta$, 
while their coincidence in total weighted sums 
highlights a hidden symmetry among zeta-like multiple series.  
This correspondence not only unifies the factorial and harmonic frameworks 
but also establishes a broader analytic link between 
combinatorial identities and integral expressions.

\medskip

In the following sections, we study the two families in parallel.  
Section~2 focuses on the multiple $\rho$-functions, deriving their factorial formulas, 
combinatorial identities, and related integral forms.  
Section~3 treats the multiple $\eta$-functions, establishing their integral correspondence with $\rho$ 
and deriving explicit and weighted sum formulas.  
Finally, Section~4 offers concluding remarks and discusses possible extensions of the results.

\section{Multiple $\rho$-values}

We introduce a class of multiple series denoted by $\rho$, whose values are rational combinations of reciprocal integers. 
Although the $\rho$-function is of independent interest, its evaluation will play a key role in expressing certain $\eta$-sums, 
which can be linked to $\rho$ through integral transformations.

Let $\bm{\alpha}=(\alpha_1,\alpha_2,\ldots,\alpha_r)$ be an $r$-tuple of nonnegative integers with $\alpha_r \geq 1$. 
We define the multiple $\rho$-value, as defined in Equation~\eqref{eq.def-rho}.

A simple instance occurs when $r=1$, which reduces to a classical evaluation \cite{Boya2008, Boya2023}:
\begin{align*}
\rho(s+1)
&=\sum_{n=1}^{\infty}\frac1{(n)_{s+1}}
 =\sum_{n=1}^{\infty}\frac1{n(n+1)(n+2)\cdots (n+s)}\\
&=\frac1s\sum_{n=1}^{\infty}
 \left[\frac1{n(n+1)\cdots (n+s-1)}
       -\frac1{(n+1)(n+2)\cdots (n+s)}\right]
 =\frac1{s\cdot s!}.
\end{align*}
Hence, the simplest case yields the closed form $\rho(s+1)=1/(s\cdot s!)$.

We now establish an explicit evaluation formula for $\rho(\alpha_1+1,\alpha_2+1,\ldots,\alpha_r+1)$, 
which will serve as the basis for subsequent structural identities.

\begin{proposition}\label{prop.rho-value}
Given an $r$-tuple $\bm{\alpha}=(\alpha_1,\alpha_2,\ldots,\alpha_r)$ of nonnegative integers
with $\alpha_r\geq 1$, we have
\begin{equation}\label{eq.001}
\rho(\alpha_1+1,\alpha_2+1,\ldots,\alpha_r+1)
=\frac1{|\bm\alpha|!\cdot\prod^r_{k=1}(\alpha_k+\alpha_{k+1}+\cdots+\alpha_r)}.
\end{equation}
\end{proposition}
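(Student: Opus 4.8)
The plan is to prove \eqref{eq.001} by induction on the depth $r$, reducing the depth by one at each step through a telescoping of the innermost summation variable. The base case $r=1$ is exactly the evaluation $\rho(\alpha_1+1)=1/(\alpha_1\cdot\alpha_1!)$ obtained above, which matches \eqref{eq.001} because then $|\bm\alpha|=\alpha_1$ and the only tail sum is $\alpha_1$. The driving tool is the elementary rising-factorial telescoping identity
\[
\frac{1}{(x)_{m+1}}=\frac{1}{m}\left[\frac{1}{(x)_m}-\frac{1}{(x+1)_m}\right]\qquad(m\ge1),
\]
which I would verify by clearing the common factor $(x+1)_{m-1}$ to reduce it to $\frac1x-\frac1{x+m}=\frac{m}{x(x+m)}$.

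For the inductive step I write $\beta_k=\alpha_1+\cdots+\alpha_{k-1}$, so that the $k$-th factor in \eqref{eq.def-rho} is $(n_k+\beta_k)_{\alpha_k+1}$, and carry out the summation over the innermost index $n_r$ first. Applying the identity with $x=n_r+\beta_r$ and $m=\alpha_r\ge1$ turns the innermost factor into a difference; since the terms vanish as $n_r\to\infty$, summing over $n_r>n_{r-1}$ telescopes to
\[
\sum_{n_r>n_{r-1}}\frac{1}{(n_r+\beta_r)_{\alpha_r+1}}
=\frac{1}{\alpha_r}\cdot\frac{1}{(n_{r-1}+\beta_r+1)_{\alpha_r}}.
\]

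The key structural observation, which I expect to be the heart of the argument, is that this newly produced factor dovetails with the existing $(r-1)$-th factor. Because $\beta_r=\beta_{r-1}+\alpha_{r-1}$, the block $(n_{r-1}+\beta_{r-1}+\alpha_{r-1}+1)_{\alpha_r}$ begins exactly where $(n_{r-1}+\beta_{r-1})_{\alpha_{r-1}+1}$ ends, so the two consecutive rising factorials merge:
\[
(n_{r-1}+\beta_{r-1})_{\alpha_{r-1}+1}\,(n_{r-1}+\beta_{r-1}+\alpha_{r-1}+1)_{\alpha_r}
=(n_{r-1}+\beta_{r-1})_{\alpha_{r-1}+\alpha_r+1}.
\]
Hence the depth-$r$ series collapses to $\tfrac{1}{\alpha_r}$ times a depth-$(r-1)$ series of exactly the same form, namely $\rho(\alpha_1+1,\ldots,\alpha_{r-2}+1,(\alpha_{r-1}+\alpha_r)+1)$, in which only the final two entries have been merged.

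Finally I would apply the induction hypothesis to $\bm\alpha'=(\alpha_1,\ldots,\alpha_{r-2},\alpha_{r-1}+\alpha_r)$. Its weight is unchanged, $|\bm\alpha'|=|\bm\alpha|$, and its tail sums are precisely $\sigma_1,\ldots,\sigma_{r-1}$ with $\sigma_k=\alpha_k+\cdots+\alpha_r$; the only tail sum not appearing is $\sigma_r=\alpha_r$, which is exactly the factor $\tfrac1{\alpha_r}$ extracted by the telescoping. Multiplying reproduces $1/\bigl(|\bm\alpha|!\prod_{k=1}^r\sigma_k\bigr)$, closing the induction. The one point demanding care is that admissibility persists at every stage: the merged last exponent is $\alpha_{r-1}+\alpha_r\ge1$, so the hypothesis $m\ge1$ of the telescoping identity and the vanishing at infinity both remain valid, and this dovetailing/merging identity is what makes each reduction land back inside the $\rho$-family.
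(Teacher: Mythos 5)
Your proof is correct and takes essentially the same route as the paper: telescoping the innermost sum via the rising-factorial difference identity and then reducing the depth by one, iterated (you phrase it as induction and make explicit the merging $(x)_{a+1}(x+a+1)_{b}=(x)_{a+b+1}$ that the paper uses implicitly). No gaps.
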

\begin{proof}
We begin by applying the telescoping method to the innermost summation.
For the last variable $n_r$, we have
\[
\sum_{n_r=n_{r-1}+1}^{\infty}
  \frac{1}{(n_r+\alpha_1+\alpha_2+\cdots+\alpha_{r-1})_{\alpha_r+1}}
=\frac1{\alpha_r}\frac1{(n_{r-1}+\alpha_1+\alpha_2+\cdots+\alpha_{r-1}+1)_{\alpha_r}}.
\]
Substituting this into the next layer of summation yields
\begin{align*}
&\frac{1}{\alpha_r}
\sum_{n_{r-1}=n_{r-2}+1}^{\infty}
  \frac{1}{(n_{r-1}+\alpha_1+\cdots+\alpha_{r-2})_{\alpha_{r-1}+\alpha_r+1}}\\
&\qquad= \frac{1}{\alpha_r(\alpha_{r-1}+\alpha_r)}
    \frac{1}{(n_{r-2}+\alpha_1+\cdots+\alpha_{r-2}+1)_{\alpha_{r-1}+\alpha_r}}.
\end{align*}

Proceeding iteratively in the same manner for the remaining summation variables,
we obtain the closed form
\begin{equation}\label{eq.rho-eval}
\rho(\alpha_1+1,\alpha_2+1,\ldots,\alpha_r+1)
 = \frac{1}{|\bm{\alpha}|!\,\prod_{k=1}^r(\alpha_k+\alpha_{k+1}+\cdots+\alpha_r)}.
\end{equation}
This completes the proof.
\end{proof}

Motivated by the Drinfeld integral representation of multiple zeta values \cite{Drinfeld1990}, 
we can express the multiple Pochhammer zeta values 
$\rho(\alpha_1+1,\alpha_2+1,\ldots,\alpha_r+1)$ 
in an analogous integral form. Specifically, one has
\begin{equation}\label{eq.1.2}
\int_{0<t_1<t_2<\cdots<t_{|\bm{\alpha}|+r}<1}
\Omega_1\Omega_2\cdots\Omega_{|\bm{\alpha}|+r},
\end{equation}
where for each $j$ we define
\[
\Omega_j =
\begin{cases}
\dfrac{dt_j}{1-t_j}, & \text{if } j=1,\,\alpha_1+2,\,\alpha_1+\alpha_2+3,\ldots,\,
\alpha_1+\alpha_2+\cdots+\alpha_{r-1}+r,\\[6pt]
dt_j, & \text{otherwise.}
\end{cases}
\]
Consequently, the function $\rho$ admits the following compact integral representation over the $r$-simplex:
\begin{align}\label{eq.rho-int}
&\rho(\alpha_1+1,\alpha_2+1,\ldots,\alpha_r+1) \\
&\qquad=\frac1{\alpha_1!\alpha_2!\cdots\alpha_r!}\int_{E_r}
 (t_2-t_1)^{\alpha_1}
 \cdots (t_r-t_{r-1})^{\alpha_{r-1}}(1-t_r)^{\alpha_r}
 \frac{dt_1\cdots dt_r}{(1-t_1)\cdots(1-t_r)}, \nonumber
\end{align}
where 
\[
E_r=\{(t_1,t_2,\ldots,t_r)\in(0,1)^r \mid 0<t_1<t_2<\cdots<t_r<1\}.
\]

For later use, we recall several auxiliary functions and notations.
The \emph{generalized harmonic function} is defined by
\[
H_n^{(s)}(x)=\sum_{k=1}^n \frac{1}{(k+x)^s},
\]
and in particular $H_n^{(s)}(0)=H_n^{(s)}=\sum_{k=1}^n k^{-s}$.
Given a sequence $(t_i)_{i\ge1}$, the \emph{modified Bell polynomials}
(or the cycle index of the symmetric group) $P_m(t_1,t_2,\ldots,t_m)$ 
are defined by \cite{Chen2017}
\[
\exp\!\left(\sum_{k=1}^{\infty}\frac{t_k}{k}z^k\right)
=\sum_{m=0}^{\infty} P_m(t_1,t_2,\ldots,t_m)z^m.
\]
The explicit expression for $P_m$ is given by
\[
P_m(t_1,t_2,\ldots,t_m)
=\sum_{\substack{k_1+2k_2+\cdots+mk_m=m\\ k_i\ge0}}
\frac{1}{k_1!k_2!\cdots k_m!}
\left(\frac{t_1}{1}\right)^{k_1}
\left(\frac{t_2}{2}\right)^{k_2}\cdots
\left(\frac{t_m}{m}\right)^{k_m}.
\]

For subsequent analysis, we introduce the truncated Hurwitz-type multiple zeta-star function.
For integers $m,n\ge1$ and a complex parameter $s$, define
\[
\zeta_n^{\star}(\{1\}^m; s)
 := \sum_{1 \le k_1 \le k_2 \le \cdots \le k_m \le n}
    \frac{1}{(k_1+s)(k_2+s)\cdots (k_m+s)}.
\]
When $s=0$, this reduces to the finite multiple zeta-star value
\[
\zeta_n^{\star}(\{1\}^m)
 = \sum_{1 \le k_1 \le k_2 \le \cdots \le k_m \le n}
   \frac{1}{k_1 k_2 \cdots k_m}.
\]

Let $h_m$ and $p_m$ denote, respectively, the $m$-th complete homogeneous 
and power-sum symmetric polynomials in infinitely many variables $x_i$,
where $x_k=k^{-1}$ for $1\le k\le m+1$ and $x_k=0$ otherwise. 
As shown in \cite{Chen2017}, the quantities $p_m$ and $h_m$ satisfy
\begin{align}\label{eq.p}
p_m &= H_n^{(m)}=\sum_{j=1}^n \frac{1}{j^m},\\
h_m &= \zeta_n^{\star}(\{1\}^m)
   = P_m(H_n^{(1)},H_n^{(2)},\ldots,H_n^{(m)}).
\label{eq.h}
\end{align}

We now establish a closed formula connecting multiple $\rho$-values with truncated multiple zeta-star sums.
This identity may be regarded as a finite analogue of the classical sum formula for multiple zeta values.

\begin{theorem}\label{thm:rho-sum}
For any nonnegative integer $m$ and integer $r\ge 1$, we have
\begin{align}\label{eq:rho-sum}
\sum_{|\bm{s}|=m}\rho(s_1+1,&\ldots,s_{r-1}+1,s_r+2)
 = \frac{\zeta_{m+1}^{\star}(\{1\}^{r-1})}{(m+1)(m+1)!} \\
 &= \frac{1}{(m+1)(m+1)!}
    P_{r-1}\bigl(H_{m+1}^{(1)},H_{m+1}^{(2)},\ldots,H_{m+1}^{(r-1)}\bigr).
\nonumber
\end{align}
\end{theorem}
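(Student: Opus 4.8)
The plan is to reduce the claim directly to Proposition~\ref{prop.rho-value} and then recognize the remaining combinatorial sum as a truncated multiple zeta-star value. First I would match the summand $\rho(s_1+1,\ldots,s_{r-1}+1,s_r+2)$ to the general index $\rho(\alpha_1+1,\ldots,\alpha_r+1)$ by setting $\alpha_k=s_k$ for $1\le k\le r-1$ and $\alpha_r=s_r+1$. Under this dictionary the weight becomes $|\bm\alpha|=|\bm s|+1=m+1$, which is \emph{independent} of the particular composition $\bm s$; hence the factorial prefactor $1/|\bm\alpha|!=1/(m+1)!$ in \eqref{eq.001} is a global constant that may be pulled out of the entire sum.

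Next I would examine the partial sums $A_k=\alpha_k+\alpha_{k+1}+\cdots+\alpha_r$ appearing in the denominator of \eqref{eq.001}. Writing $S_k=s_k+s_{k+1}+\cdots+s_r$, one checks that $A_1=m+1$ and $A_k=S_k+1$ for $2\le k\le r$. Factoring out the leading term $A_1=m+1$ then reduces the entire claim to the purely combinatorial identity
\[
\sum_{|\bm s|=m}\frac{1}{\prod_{k=2}^{r}A_k}
=\zeta_{m+1}^{\star}(\{1\}^{r-1}).
\]

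The heart of the argument, and the step I expect to require the most care, is to establish that the map $\bm s\mapsto(A_2,A_3,\ldots,A_r)$ is a bijection from the set of nonnegative compositions $\bm s$ of $m$ into $r$ parts onto the set of weakly monotone tuples with $1\le A_r\le A_{r-1}\le\cdots\le A_2\le m+1$. The monotonicity $A_2\ge A_3\ge\cdots\ge A_r$ follows because the $S_k$ are nonincreasing in $k$, while the boundary bounds $A_r\ge1$ and $A_2\le m+1$ come from $S_r=s_r\ge0$ and $S_2=m-s_1\le m$. Invertibility is explicit: one recovers $s_r=A_r-1$, $s_k=A_k-A_{k+1}$ for $2\le k\le r-1$, and $s_1=m+1-A_2$, each of which is nonnegative precisely under the stated inequalities. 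Having checked this, the relabelling $(A_r,A_{r-1},\ldots,A_2)\mapsto(k_1,k_2,\ldots,k_{r-1})$ converts the left-hand side into $\sum_{1\le k_1\le\cdots\le k_{r-1}\le m+1}1/(k_1\cdots k_{r-1})$, which is exactly the definition of $\zeta_{m+1}^{\star}(\{1\}^{r-1})$.

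Finally, the second equality in \eqref{eq:rho-sum} is immediate from the symmetric-function identity \eqref{eq.h}: applied with truncation parameter $m+1$ and $r-1$ entries, it gives $\zeta_{m+1}^{\star}(\{1\}^{r-1})=P_{r-1}\bigl(H_{m+1}^{(1)},H_{m+1}^{(2)},\ldots,H_{m+1}^{(r-1)}\bigr)$, which closes the chain of identities. As a sanity check, the cases $r=1$ (reducing to $\rho(m+2)=1/((m+1)(m+1)!)$, consistent with the single-variable evaluation) and $r=2$ (yielding $H_{m+1}^{(1)}/((m+1)(m+1)!)$) both confirm the formula.
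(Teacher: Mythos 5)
Your proposal is correct and follows essentially the same route as the paper's own proof: both substitute the closed form from Proposition~\ref{prop.rho-value}, pull out the constant factor $\frac{1}{(m+1)(m+1)!}$ coming from the weight $m+1$ and the leading partial sum, and reindex the remaining tail sums (the paper via $t_i=s_{r+1-i}$ and $n_j=1+t_1+\cdots+t_j$, you via $A_k=S_k+1$ read in reverse) to obtain $\zeta_{m+1}^{\star}(\{1\}^{r-1})$, finishing with Equation~\eqref{eq.h}. Your explicit verification that the reindexing is a bijection onto $\{1\le k_1\le\cdots\le k_{r-1}\le m+1\}$ is a welcome detail the paper leaves implicit, but the argument is the same.
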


\begin{proof}
Define
\[
A := \sum_{|\bm{s}|=m}\rho(s_1+1,s_2+1,\ldots,s_{r-1}+1,s_r+2).
\]
By formula~\eqref{eq.001}, it follows that
\[
A
 = \sum_{|\bm{s}|=m}
   \left[\,(|\bm{s}|+1)!\,
          \prod_{k=1}^r (s_k+s_{k+1}+\cdots+s_r+1)\,\right]^{-1}.
\]
Let $t_i = s_{r+1-i}$ for $1\le i\le r$.
Then the above summation becomes
\[
A = \frac{1}{(m+1)(m+1)!}
    \sum_{|\bm{t}|=m}
    \left[\prod_{j=1}^{r-1}(1+t_1+t_2+\cdots+t_j)\right]^{-1}.
\]
Define
\[
n_1 = 1+t_1,\quad
n_2 = 1+t_1+t_2,\quad
\ldots,\quad
n_{r-1} = 1+t_1+t_2+\cdots+t_{r-1}.
\]
Then we can rewrite the expression as
\[
A
 = \frac{1}{(m+1)(m+1)!}
   \sum_{1\le n_1\le n_2\le\cdots\le n_{r-1}\le m+1}
   \frac{1}{n_1n_2\cdots n_{r-1}}.
\]
The assertion follows from Equation~\eqref{eq.h}.
\end{proof}

Applying the same method as in the proof of Theorem~\ref{thm:rho-sum}, 
namely substituting the explicit value of $\rho$ from Equation~\eqref{eq.001} 
and performing a suitable change of variables, 
we obtain the following corollary.

\begin{corollary}\label{cor:rho-general}
For nonnegative integers $r$, $s$, and $q$, we have
\begin{align*}
&\sum_{|\bm{\alpha}|=r}
  \rho(\alpha_1+1,\ldots,\alpha_q+1,\alpha_{q+1}+s+2) \\
&\qquad= \frac{1}{(r+s+1)!}
   \sum_{|\bm{\alpha}|=r}
   \frac{1}{\displaystyle\prod_{k=1}^{q+1}\!
      \bigl(s+1+\alpha_k+\alpha_{k+1}+\cdots+\alpha_{q+1}\bigr)} \\[4pt]
&\qquad= \frac{1}{(r+s+1)\,(r+s+1)!}\,
   \zeta_{r+1}^{\star}(\{1\}^q; s),
\end{align*}
where $\zeta_{r+1}^{\star}(\{1\}^q; s)$ denotes the truncated Hurwitz-type 
multiple zeta-star function defined earlier.
\end{corollary}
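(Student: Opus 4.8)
The plan is to follow verbatim the strategy used in the proof of Theorem~\ref{thm:rho-sum}: substitute the closed form~\eqref{eq.001}, reverse the order of the index to turn suffix sums into prefix sums, peel off one factor, and recognize the remainder as a truncated zeta-star sum. First I would apply~\eqref{eq.001} to the index $(\alpha_1+1,\ldots,\alpha_q+1,\alpha_{q+1}+s+2)$. Writing this as $\rho(\delta_1+1,\ldots,\delta_{q+1}+1)$ with $\delta_i=\alpha_i$ for $i\le q$ and $\delta_{q+1}=\alpha_{q+1}+s+1$, one has $|\bm{\delta}|=|\bm{\alpha}|+s+1=r+s+1$, which is constant over the summation, so the factorial prefactor is always $(r+s+1)!$. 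Moreover each suffix sum becomes $\delta_k+\cdots+\delta_{q+1}=(s+1)+\alpha_k+\cdots+\alpha_{q+1}$ for every $1\le k\le q+1$. Substituting and summing over $|\bm{\alpha}|=r$ then gives the first claimed equality immediately.

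Next I would convert the suffix sums into prefix sums via the reversal $\beta_i=\alpha_{q+2-i}$, so that $\prod_{k=1}^{q+1}(s+1+\alpha_k+\cdots+\alpha_{q+1})=\prod_{j=1}^{q+1}(s+1+\beta_1+\cdots+\beta_j)$. The topmost factor (the $j=q+1$ term) equals $s+1+r=r+s+1$ and is independent of the individual $\beta_i$, so it factors out as $1/(r+s+1)$. Finally I would set $k_j=1+\beta_1+\cdots+\beta_j$ for $1\le j\le q$, which turns each remaining factor into $k_j+s$ and converts the constraint $|\bm{\alpha}|=r$ into the chain $1\le k_1\le\cdots\le k_q\le r+1$. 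This identifies the residual sum with $\zeta_{r+1}^{\star}(\{1\}^q;s)$, yielding the second equality and completing the proof.

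The only point requiring care—and the main (though modest) obstacle—is the combinatorial bookkeeping in the last step: one must check that the map $\bm{\alpha}\mapsto(k_1,\ldots,k_q)$ is a bijection from the compositions $\alpha_1+\cdots+\alpha_{q+1}=r$ onto the weakly increasing sequences $1\le k_1\le\cdots\le k_q\le r+1$, each attained exactly once. The freedom in $\beta_{q+1}$, which disappears once the $j=q+1$ factor is removed, is precisely what absorbs the leftover mass $r-(\beta_1+\cdots+\beta_q)$ and guarantees surjectivity; injectivity is clear because the partial sums $k_j$ determine the $\beta_i$ and hence the $\alpha_i$. The boundary values $k_1\ge1$ and $k_q=1+r-\beta_{q+1}\le r+1$ follow at once from $\beta_i\ge0$ and $\sum_i\beta_i=r$, which pins the upper index of the zeta-star sum to exactly $r+1$.
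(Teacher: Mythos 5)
Your proposal is correct and follows exactly the route the paper indicates: the paper derives this corollary by "applying the same method as in the proof of Theorem~\ref{thm:rho-sum}," i.e.\ substituting the closed form~\eqref{eq.001} (which yields the constant prefactor $(r+s+1)!$ and the shifted suffix sums $s+1+\alpha_k+\cdots+\alpha_{q+1}$), reversing the index order, extracting the constant top factor $r+s+1$, and reindexing the partial sums to match the definition of $\zeta_{r+1}^{\star}(\{1\}^q;s)$. Your bijection check at the end is the same bookkeeping implicit in the paper's proof of Theorem~\ref{thm:rho-sum}, and it is carried out correctly.
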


By setting $r+s=n$ and rearranging terms, we further obtain a weighted sum formula
\[
\sum_{|\bm{\alpha}|=n}
  \rho(\alpha_1+1,\ldots,\alpha_q+1,\alpha_{q+1}+2)\,(\alpha_{q+1}+1)
   = \frac{1}{(n+1)!}.
\]
Substituting the explicit expression of \(\rho\) from Equation~\eqref{eq.001}, 
we obtain the following purely combinatorial identity.
\begin{proposition}
For all nonnegative integers \(q\) and \(n\), we have
\begin{equation}\label{eq.qn-any}
\sum_{\substack{a_1+\cdots+a_{q+1}=n\\[2pt] a_i\ge0}}
\frac{1}{
\displaystyle\prod_{j=1}^{q}
\bigl(a_j+a_{j+1}+\cdots+a_{q+1}+1\bigr)
}
=1,
\end{equation}
where, as usual, the empty product (for \(q=0\)) is taken to be~1.
\end{proposition}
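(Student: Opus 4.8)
The plan is to read the identity directly off the weighted sum formula displayed just above, combined with the closed evaluation \eqref{eq.001}, so that \eqref{eq.qn-any} emerges after a single cancellation. The key observation is that shifting the last entry of the $\rho$-index by one extra unit (from $\alpha_{q+1}+1$ to $\alpha_{q+1}+2$) introduces into the denominator of the evaluated $\rho$-value precisely the factor $(\alpha_{q+1}+1)$, which is exactly what the weight $(\alpha_{q+1}+1)$ is designed to cancel. Once that cancellation is made, the remaining denominator is the product in \eqref{eq.qn-any}, and matching against the known value $1/(n+1)!$ forces the combinatorial sum to equal $1$.

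Concretely, I would first apply Proposition~\ref{prop.rho-value} to the index $(\alpha_1+1,\ldots,\alpha_q+1,\alpha_{q+1}+2)$. Matching it with the general pattern $(\beta_1+1,\ldots,\beta_{q+1}+1)$ forces $\beta_k=\alpha_k$ for $k\le q$ and $\beta_{q+1}=\alpha_{q+1}+1$, whence $|\bm\beta|=n+1$ and, by \eqref{eq.001},
\[
\rho(\alpha_1+1,\ldots,\alpha_q+1,\alpha_{q+1}+2)
=\frac{1}{(n+1)!\,(\alpha_{q+1}+1)\displaystyle\prod_{k=1}^{q}\bigl(\alpha_k+\cdots+\alpha_{q+1}+1\bigr)}.
\]
Here the isolated factor $(\alpha_{q+1}+1)$ is the $k=q+1$ term of $\prod_{k=1}^{q+1}(\beta_k+\cdots+\beta_{q+1})$, while the remaining $q$ terms are each $(\alpha_k+\cdots+\alpha_{q+1})+1$. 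Multiplying by the weight $(\alpha_{q+1}+1)$ cancels that isolated factor, leaving
\[
\rho(\alpha_1+1,\ldots,\alpha_q+1,\alpha_{q+1}+2)\,(\alpha_{q+1}+1)
=\frac{1}{(n+1)!}\cdot\frac{1}{\displaystyle\prod_{k=1}^{q}\bigl(\alpha_k+\cdots+\alpha_{q+1}+1\bigr)}.
\]

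Summing over all $\bm\alpha$ with $|\bm\alpha|=n$ and comparing with the weighted sum formula $\sum_{|\bm\alpha|=n}\rho(\alpha_1+1,\ldots,\alpha_q+1,\alpha_{q+1}+2)\,(\alpha_{q+1}+1)=\tfrac{1}{(n+1)!}$ gives $\tfrac{1}{(n+1)!}\sum_{|\bm\alpha|=n}\bigl[\prod_{k=1}^{q}(\alpha_k+\cdots+\alpha_{q+1}+1)\bigr]^{-1}=\tfrac{1}{(n+1)!}$; cancelling the common constant $1/(n+1)!$ yields \eqref{eq.qn-any}. The degenerate case $q=0$ is immediate, since the product is empty and the constraint $a_1=n$ leaves a single summand equal to $1$; and since $\alpha_{q+1}+1\ge1$, every $\rho$-value invoked is admissible, so no convergence issue arises.

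I expect the only delicate point to be the bookkeeping in the factorization: one must verify that after the shift the innermost factor of the product in \eqref{eq.001} is exactly $(\alpha_{q+1}+1)$ while the other $q$ factors acquire the $+1$ shift uniformly, so that the weight cancels cleanly. As an independent check (and a route that avoids quoting the weighted sum formula), one can prove \eqref{eq.qn-any} combinatorially: substituting the partial sums $b_j=a_j+\cdots+a_{q+1}$ turns the constraint into $n=b_1\ge b_2\ge\cdots\ge b_{q+1}\ge0$ and the summand into $\bigl((n+1)\prod_{j=2}^{q}(b_j+1)\bigr)^{-1}$; setting $c_i=b_{i+1}+1$ reduces the inner sum to $\sum_{1\le c_q\le\cdots\le c_1\le n+1}(c_1\cdots c_{q-1})^{-1}$, and summing from the smallest index outward, each summation over the innermost variable contributes a factor cancelling the next denominator, telescoping down to $\sum_{c_1=1}^{n+1}1=n+1$ and hence to the value $1$. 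This telescoping step is the crux of the self-contained argument and is easily established by induction on $q$.
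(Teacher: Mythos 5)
Your main argument is correct and is exactly the paper's route: substitute the closed form \eqref{eq.001} into the weighted sum formula $\sum_{|\bm\alpha|=n}\rho(\alpha_1+1,\ldots,\alpha_q+1,\alpha_{q+1}+2)\,(\alpha_{q+1}+1)=\tfrac{1}{(n+1)!}$, note that the weight $(\alpha_{q+1}+1)$ cancels the $k=q+1$ factor of the product in \eqref{eq.001}, and divide out the common constant $\tfrac{1}{(n+1)!}$. Your supplementary telescoping verification via the partial sums $b_j=a_j+\cdots+a_{q+1}$ is also correct and yields a self-contained elementary proof independent of the $\rho$-machinery, but the paper itself proceeds only by the first route.
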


Here, the variables $(a_1,\ldots,a_{q+1})$ are a simple relabeling of 
the parameters $(\alpha_1,\ldots,\alpha_{q+1})$ appearing in 
Corollary~\ref{cor:rho-general}, so that the final form of
Equation~\eqref{eq.qn-any} attains a fully symmetric 
and canonical structure with respect to its summation indices.
This identity is purely combinatorial in nature and holds for all integers 
$q,n\ge0$, independent of any convergence or analytic assumptions.


Since the explicit values of $\rho(\alpha_1+1,\ldots,\alpha_r+1)$ 
can be computed explicitly from Proposition~\ref{prop.rho-value}, 
we record several representative evaluations below.

\begin{enumerate}
\item 
For nonnegative integers $p$ and $\alpha_1,\ldots,\alpha_r$ satisfying $\alpha_r\ge1$, 
\begin{equation} \label{eq.rho-head}
\rho(\{1\}^p,\alpha_1+1,\alpha_2+1,\ldots,\alpha_r+1)
=|\bm{\alpha}|^{-p}\,
 \rho(\alpha_1+1,\alpha_2+1,\ldots,\alpha_r+1).
\end{equation}
In particular, when $r=1$ and $\alpha_1=m-1$, this gives 
\[
\rho(\{1\}^p,m+1) = \frac{1}{m^{p+1}\,m!}.
\]

\item 
To illustrate the structural patterns of multiple $\rho$-values, 
we list several explicit evaluations and closed forms below.

\begin{center}
\captionof{table}{Values of $\rho(\alpha_1,\alpha_2,\ldots,\alpha_r)$ for weights $2\le w\le6$.}
\end{center}
\vskip -0.7cm

\setlength{\columnseprule}{0.4pt} 
\setlength{\columnsep}{20pt}      
\vskip -0.7cm
\begin{multicols}{3}
\TableHeadert
\Rowt{$2$}{$\left(2\right)$}{$1$}
\Rowt{$3$}{$\left(3\right)$}{$1/4$}
\Rowt{$3$}{$\left(1,\,2\right)$}{$1$}
\Rowt{$4$}{$\left(4\right)$}{$1/18$}
\Rowt{$4$}{$\left(1,\,3\right)$}{$1/8$}
\Rowt{$4$}{$\left(2,\,2\right)$}{$1/4$}
\Rowt{$4$}{$\left(1,\,1,\,2\right)$}{$1$}
\Rowt{$5$}{$\left(5\right)$}{$1/96$}
\Rowt{$5$}{$\left(1,\,4\right)$}{$1/54$}
\Rowt{$5$}{$\left(2,\,3\right)$}{$1/36$}
\TableHeadert
\Rowt{$5$}{$\left(3,\,2\right)$}{$1/18$}
\Rowt{$5$}{$\left(1,\,1,\,3\right)$}{$1/16$}
\Rowt{$5$}{$\left(1,\,2,\,2\right)$}{$1/8$}
\Rowt{$5$}{$\left(2,\,1,\,2\right)$}{$1/4$}
\Rowt{$5$}{$\left(1,\,1,\,1,\,2\right)$}{$1$}
\Rowt{$6$}{$\left(6\right)$}{$1/600$}
\Rowt{$6$}{$\left(1,\,5\right)$}{$1/384$}
\Rowt{$6$}{$\left(2,\,4\right)$}{$1/288$}
\Rowt{$6$}{$\left(3,\,3\right)$}{$1/192$}
\Rowt{$6$}{$\left(4,\,2\right)$}{$1/96$}
\TableHeadert
\Rowt{$6$}{$\left(1,\,1,\,4\right)$}{$1/162$}
\Rowt{$6$}{$\left(1,\,2,\,3\right)$}{$1/108$}
\Rowt{$6$}{$\left(1,\,3,\,2\right)$}{$1/54$}
\Rowt{$6$}{$\left(2,\,1,\,3\right)$}{$1/72$}
\Rowt{$6$}{$\left(2,\,2,\,2\right)$}{$1/36$}
\Rowt{$6$}{$\left(3,\,1,\,2\right)$}{$1/18$}
\Rowt{$6$}{$\left(1,\,1,\,1,\,3\right)$}{$1/32$}
\Rowt{$6$}{$\left(1,\,1,\,2,\,2\right)$}{$1/16$}
\Rowt{$6$}{$\left(1,\,2,\,1,\,2\right)$}{$1/8$}
\Rowt{$6$}{$\left(2,\,1,\,1,\,2\right)$}{$1/4$}
\Rowt{$6$}{$\left(1,\,1,\,1,\,1,\,2\right)$}{$1$}
\end{multicols}

For the case of uniform indices, the function admits a compact general formula:
\[
\rho(\{a+1\}^n)
   = \frac{1}{a^n\,n!\,(na)!}.
\]
Setting $a=1$ yields the simple case $\rho(\{2\}^n)=1/(n!)^2$.

For alternating parameters, the function admits a compact one-parameter family of identities:
\[
\rho(\{1,a+1\}^n)
 = \frac{1}{a^{2n}(na)!\,n!^2}.
\]
In particular, setting $a=2$ reproduces the earlier formula
\[
\rho(\{1,3\}^n)
 = \frac{1}{4^n(n!)^2(2n)!}.
\]

Finally, the strictly increasing index pattern can be obtained directly from 
Proposition~\ref{prop.rho-value}:
\[
\rho(1,2,\ldots,n)
=\frac{2^{n+1}(2n-1)}{(n-1)(2n)!\left(\frac{n(n-1)}{2}\right)!}.
\]
\end{enumerate}

\section{Multiple $\eta$ values}

Let $\mathbb{Q}[[x_1,x_2,\ldots]]$ denote the algebra of formal power series 
in countably many variables, each of bounded total degree.  
An element $u\in \mathbb{Q}[[x_1,x_2,\ldots]]$ is called 
\emph{quasisymmetric} if, for every monomial 
$x_{a_1}^{\alpha_1}\cdots x_{a_k}^{\alpha_k}$ with $a_1<a_2<\cdots<a_k$, 
the coefficient of this monomial in $u$ is identical to that of 
$x_1^{\alpha_1}\cdots x_k^{\alpha_k}$.  
The collection of all such elements forms a graded connected Hopf algebra, 
denoted by $\QSym$ (see, for example, \cite{Hof1, Mac, Sta}).

For a composition $I=(\alpha_1,\ldots,\alpha_k)$ of a positive integer, 
the integer $k$ is called the \emph{length} of $I$, and the sum 
$|I|=\alpha_1+\cdots+\alpha_k$ is its \emph{weight}.  
The \emph{monomial quasisymmetric function} associated with $I$ is defined by
\[
   M_I(x_1,x_2,\ldots)
   := \sum_{1\leq i_1<i_2<\cdots<i_k}
      x_{i_1}^{\alpha_1}\cdots x_{i_k}^{\alpha_k},
\]
with the convention $M_0=1$.  
The family $\{M_I\}$ forms a linear basis of $\QSym$.  

\medskip

Fix positive integers $n,m$.  
We introduce the specialization
\[
   x_i = \frac{1}{(i+z)^m}, \quad 1\leq i\leq n,
   \qquad x_i=0, \quad i>n,
\]
where $z\in \mathbb{C}\setminus\{-1,-2,\ldots\}$ so that no denominator vanishes.  
Since only finitely many variables are nonzero under this specialization, 
each $M_I$ reduces to a finite sum, namely
\[
   M_{(\alpha_1,\ldots,\alpha_k)}
   = \zeta_n(m\alpha_1,m\alpha_2,\ldots,m\alpha_k;z),
\]
where $\zeta_n(s_1,\ldots,s_k;z)$ denotes a truncated multiple Hurwitz zeta sum.  
We denote by $\HQ$ the subalgebra of $\QSym$ spanned by these specializations.  

\medskip

For $u\in\HQ$ and $(s_1,\ldots,s_k)\in\mathbb{N}_0^k$ with 
$s_1+\cdots+s_k \ge 2$, we define
\[
   \eta_{s_1,\ldots,s_k}(u;z)
   := \sum_{n=1}^\infty
      \frac{u}{(n+z)^{s_1}(n+1+z)^{s_2}\cdots(n+k-1+z)^{s_k}},
\]
which defines a complex-valued function on $\HQ$.  

\medskip

In the author's previous work~\cite{Chen202010}, 
the functions $\eta_{s_1,\ldots,s_k}(u;z)$ were expressed explicitly 
as linear combinations of multiple Hurwitz zeta functions and 
special values of the generalized harmonic numbers $H_n^{(m)}(z)$.

\medskip

In the present work, we develop an integral approach to the study of 
$\eta_{s_1,\ldots,s_k}(u;z)$.  
As a first step, we focus on the structurally simpler prototype
\[
   \eta(\alpha_1,\alpha_2,\ldots,\alpha_r)
   := \sum_{n=1}^\infty 
      \frac{1}{n^{\alpha_1}(n+1)^{\alpha_2}\cdots (n+r-1)^{\alpha_r}},
\]
which may be viewed as the specialization of the general $\eta$-function 
corresponding to $u=1$ and $z=0$.  
This prototype retains the essential shifted-denominator structure, 
and, as will be shown below, its summations can be connected to those of 
the multiple $\rho$-function through suitable integral transformations.

The quantity $\eta(\alpha_1+1,\alpha_2+1,\ldots,\alpha_r+1)$ 
admits the following integral representation:
\begin{equation}\label{eq.eta-int}
\int_{0<t_1<t_2<\cdots<t_{|\bm\alpha|+r}<1}
\Omega_1\Omega_2\cdots\Omega_{|\bm\alpha|+r},
\end{equation}
where each differential form $\Omega_j$ is determined according to the rule
\[
\Omega_j =
\begin{cases}
\dfrac{dt_1}{1-t_1}, &\text{if } j=1,\\[6pt]
dt_j, &\text{if } j=\alpha_1+2,\,\alpha_1+\alpha_2+3,\,\ldots,\,\alpha_1+\alpha_2+\cdots+\alpha_{r-1}+r,\\[6pt]
\dfrac{dt_j}{t_j}, &\text{otherwise.}
\end{cases}
\]

Consequently, the multiple integral in Equation~\eqref{eq.eta-int} can be rewritten as
\begin{align}
&\eta(\alpha_1+1,\alpha_2+1,\ldots,\alpha_r+1) \\
&\qquad=\frac1{\alpha_1!\alpha_2!\cdots\alpha_r!}\int_{E_r}
  \bigl(\log\tfrac{t_2}{t_1}\bigr)^{\alpha_1}
  \cdots
  \bigl(\log\tfrac{t_r}{t_{r-1}}\bigr)^{\alpha_{r-1}}
  \bigl(\log\tfrac{1}{t_r}\bigr)^{\alpha_r}
  \frac{dt_1\cdots dt_r}{1-t_1}, \nonumber
\end{align}
where the integration domain is the standard $r$-simplex
\[
E_r=\{(t_1,t_2,\ldots,t_r)\in\mathbb{R}^r \mid 0<t_1<t_2<\cdots<t_r<1\}.
\]

\begin{theorem}\label{thm:rho-eta-connection}
For nonnegative integers $r,q$, the following identity holds:
\begin{equation}\label{eq:rho-eta-connection}
\sum_{|\bm{s}|=q}
\eta(s_1+1,s_2+1,\ldots,s_{r+2}+1)
=\sum_{|\bm{\alpha}|=r}
\rho(\alpha_1+1,\alpha_2+1,\ldots,\alpha_q+1,\alpha_{q+1}+2).
\end{equation}
\end{theorem}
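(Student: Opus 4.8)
The plan is to evaluate both sides in closed form and then reconcile the two expressions through a single binomial--harmonic identity. For the right-hand side no new work is needed: it is exactly the sum treated in Corollary~\ref{cor:rho-general} with $s=0$ (equivalently Theorem~\ref{thm:rho-sum} at weight $m=r$ and depth $q+1$), so that
\[
\sum_{|\bm{\alpha}|=r}\rho(\alpha_1+1,\ldots,\alpha_q+1,\alpha_{q+1}+2)
=\frac{\zeta_{r+1}^{\star}(\{1\}^q)}{(r+1)\,(r+1)!}.
\]
The real content is therefore to show that the left-hand side collapses to this same quantity.

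First I would insert the integral representation \eqref{eq.eta-int} of $\eta$ at depth $r+2$ and sum over all $\bm{s}$ with $|\bm{s}|=q$. Writing $u_i=\log(t_{i+1}/t_i)$ for $1\le i\le r+1$ and $u_{r+2}=\log(1/t_{r+2})$, the prefactors $1/\prod_i s_i!$ turn the $\bm{s}$-summation into a multinomial expansion, $\sum_{|\bm{s}|=q}\prod_i u_i^{s_i}/s_i!=(u_1+\cdots+u_{r+2})^q/q!$. The logarithms telescope, $u_1+\cdots+u_{r+2}=\log(1/t_1)$, leaving
\[
\sum_{|\bm{s}|=q}\eta(s_1+1,\ldots,s_{r+2}+1)
=\frac{1}{q!}\int_{E_{r+2}}\Bigl(\log\tfrac{1}{t_1}\Bigr)^{q}\,
\frac{dt_1\cdots dt_{r+2}}{1-t_1}.
\]
Since the integrand depends on $t_1$ alone, I would integrate out $t_2<\cdots<t_{r+2}$ over the ordered simplex in $(t_1,1)$, contributing the volume $(1-t_1)^{r+1}/(r+1)!$; against the remaining measure $dt_1/(1-t_1)$ this leaves a power $(1-t_1)^r$, which I expand binomially before applying $\int_0^1 t^{\,j}(\log\tfrac1t)^q\,dt=q!/(j+1)^{q+1}$. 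After the factor $q!$ cancels, this gives the single-variable evaluation
\[
\sum_{|\bm{s}|=q}\eta(s_1+1,\ldots,s_{r+2}+1)
=\frac{1}{(r+1)!}\sum_{j=0}^{r}\binom{r}{j}\frac{(-1)^j}{(j+1)^{q+1}}.
\]

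The main obstacle, and the only genuinely nontrivial step, is then the identity
\[
\sum_{j=0}^{r}\binom{r}{j}\frac{(-1)^j}{(j+1)^{q+1}}
=\frac{\zeta_{r+1}^{\star}(\{1\}^q)}{r+1},
\]
after which comparison with the closed form of the right-hand side finishes the proof. I would establish it by forming the generating function in an auxiliary variable $x$: summing $x^q$ against the left side gives $\sum_{j=0}^{r}(-1)^j\binom{r}{j}/(j+1-x)$, which by comparing partial fractions equals $r!/\prod_{k=1}^{r+1}(k-x)$ (the coefficient of $(m-x)^{-1}$ on the right is $r!/\prod_{k\neq m}(k-m)=(-1)^{m-1}\binom{r}{m-1}$, matching the term $j=m-1$ on the left). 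Rewriting this as $\tfrac1{r+1}\prod_{k=1}^{r+1}(1-x/k)^{-1}$ and invoking the standard generating function $\sum_{q\ge0}h_q\,x^q=\prod_{k=1}^{r+1}(1-x/k)^{-1}$, whose coefficients are the finite zeta-star values $\zeta_{r+1}^{\star}(\{1\}^q)$ by \eqref{eq.h}, the claim follows on matching coefficients of $x^q$. The partial-fraction bookkeeping is routine; the one point to watch is the off-by-one shift between the depth $r+2$ on the $\eta$-side and the depth $q+1$ on the $\rho$-side, which is precisely what makes the simplex volume supply the power $(1-t_1)^{r+1}$ needed for the weights on the two sides to balance.
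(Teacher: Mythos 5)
Your proof is correct, but it takes a genuinely different route from the paper's. The paper proves the identity by a direct integral transformation: it writes the $\rho$-sum as a two-dimensional integral with integrand $\bigl(\log\frac{1-t_1}{1-t_2}\bigr)^q(t_2-t_1)^r\frac{dt_1dt_2}{1-t_1}$, applies the birational change of variables $u_1=\frac{1-t_2}{1-t_1}$, $u_2=\frac{1-t_1-t_1t_2+t_1^2}{1-t_1}$ to convert it into the corresponding $\eta$-type integral, and then binomially expands $\bigl(\log\frac{1}{u_1}\bigr)^q$ to recover the individual $\eta$-values; no closed form for the common value appears in that argument. You instead evaluate both sides explicitly and match them: the $\rho$-side via Theorem~\ref{thm:rho-sum} (legitimate, as it is proved earlier and independently), and the $\eta$-side by the multinomial collapse of the simplex integral to $\frac{1}{(r+1)!}\sum_{j=0}^{r}\binom{r}{j}\frac{(-1)^j}{(j+1)^{q+1}}$, reconciled through the binomial--harmonic identity. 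That last identity is precisely the paper's Equation~\eqref{eq.chen} (with $n=r$, $m=q+1$) combined with Equation~\eqref{eq.h}, so you could have cited it directly rather than re-deriving it by partial fractions, though your generating-function argument is sound (all interchanges of sum and integral are justified by positivity). Your approach buys more than the theorem asks for --- it identifies the common value as $\zeta_{r+1}^{\star}(\{1\}^q)/\bigl((r+1)(r+1)!\bigr)$, a fact the paper only assembles later in the Remark following Theorem~\ref{thm.32} by chaining Theorems~\ref{thm:rho-sum}, \ref{thm:rho-eta-connection} and \ref{thm.32} --- at the cost of making Theorem~\ref{thm:rho-eta-connection} logically dependent on Theorem~\ref{thm:rho-sum} and on the external identity from \cite{BatirC2019}, whereas the paper's change-of-variables proof is self-contained and exhibits the $\rho$--$\eta$ correspondence at the level of integrands rather than of evaluated sums.
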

\begin{proof}
We first express the left-hand side of Equation~\eqref{eq:rho-eta-connection} 
in the integral form
\[
\sum_{|\bm{\alpha}|=r}
\rho(\alpha_1+1,\alpha_2+1,\ldots,\alpha_q+1,\alpha_{q+1}+2)
=\frac{1}{q!\,r!}
\int_{0<t_1<t_2<1}
\!\!\left(\log\frac{1-t_1}{1-t_2}\right)^{\!q}
(t_2-t_1)^r
\frac{dt_1\,dt_2}{1-t_1}.
\]

Next, we perform the change of variables
\[
u_1=\frac{1-t_2}{1-t_1}, \qquad 
u_2=\frac{1-t_1-t_1t_2+t_1^2}{1-t_1}.
\]
The Jacobian of this transformation is
\[
\frac{\partial(t_1,t_2)}{\partial(u_1,u_2)}
=\frac{u_1-u_2}{(1-u_1)^2}.
\]
After substituting this relation into the integral and simplifying,
the Jacobian factor cancels with the differential terms,
so that the measure again takes the normalized form
\[
\frac{du_1\,du_2}{1-u_1}.
\]

The above integral becomes
\[
\frac{1}{q!\,r!}
\int_{0<u_1<u_2<1}
\left(\log\frac{1}{u_1}\right)^{\!q}
(u_2-u_1)^r
\frac{du_1\,du_2}{1-u_1}.
\]

We then expand the factor 
\[
\left(\log\frac{1}{u_1}\right)^q
=\sum_{a+b=q}\binom{q}{a}
\left(\log\frac{u_2}{u_1}\right)^a
\left(\log\frac{1}{u_2}\right)^b
\]
by the binomial theorem.  
Substituting this expansion into the integral and using 
the integral representation of $\eta$ yields
\[
\sum_{a+b=q}
\sum_{s_1+s_2+\cdots+s_{r+1}=a}
\eta(s_1+1,s_2+1,\ldots,s_{r+1}+1,b+1).
\]
Combining these two summations yields
\[
\sum_{|\bm{s}|=q}
\eta(s_1+1,s_2+1,\ldots,s_{r+2}+1),
\]
which completes the proof.
\end{proof}

The preceding theorem reveals a structural correspondence between 
the multiple $\rho$-sums and the multiple $\eta$-sums: 
through suitable integral substitutions, 
each family of $\rho$-values of depth~$q{+}1$ 
can be transformed into a sum of $\eta$-values 
of greater depth~$r{+}2$ but with the same total weight.  
This correspondence enables one to translate evaluations of 
$\rho$-type series—whose terms involve shifted rising factorials—
into those of $\eta$-type series, 
which are defined by products of shifted powers.  
Such a relation provides an effective analytic bridge 
between the two classes of multiple sums and offers 
a unified perspective on their coincident sum formulas.

In the subsequent results, 
we further exploit this integral framework 
to evaluate several families of $\eta$-sums 
under specific combinatorial constraints, 
expressing their total values in closed form 
through harmonic numbers and Bell-type polynomials.

\begin{theorem}\label{thm.32}
For integers $n\geq 1$ and $q\geq 0$, the following identity holds:
\begin{equation}
\sum_{r+s=n\atop |\bm\alpha|=q}
\eta(\alpha_1+1,\ldots,\alpha_r+1,\alpha_{r+1}+2,\{1\}^s)
=\frac{P_{q+1}(H_n^{(1)},\ldots,H_n^{(q+1)})}{n\cdot n!}.
\end{equation}
\end{theorem}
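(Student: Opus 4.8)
The plan is to reduce both sides to one common multiple $\rho$-sum and then read off the closed form from Theorem~\ref{thm:rho-sum}. The only genuinely new ingredient is a combinatorial reindexing of the left-hand side; after that, the identity follows from one application each of Theorem~\ref{thm:rho-eta-connection} and Theorem~\ref{thm:rho-sum} with a single choice of parameters.

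First I would note that every $\eta$-summand on the left has depth $n+1$ and weight $q+n+2$, and that its argument in position $r+1$ equals $\alpha_{r+1}+2\ge 2$ while every later position equals $1$; thus position $r+1$ is exactly the last index whose argument exceeds $1$. This suggests grouping all depth-$(n+1)$, weight-$(q+n+2)$ indices $\bm\beta=(\beta_1,\dots,\beta_{n+1})$ of positive integers according to the position $p$ of their last part $\ge 2$. Since $q+n+2>n+1$, such a part always exists, and setting $r=p-1$, $s=n+1-p$, $\alpha_i=\beta_i-1$ for $i\le r$, and $\alpha_{r+1}=\beta_p-2$ recovers a unique triple $(r,s,\bm\alpha)$; a one-line count using $\sum_{i>p}\beta_i=s$ gives $|\bm\alpha|=q$ and $r+s=n$. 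Hence the left-hand side is precisely the unrestricted sum $\sum_{|\bm s|=q+1}\eta(s_1+1,\dots,s_{n+1}+1)$ over all $\eta$-values of that weight and depth.

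Next I would apply Theorem~\ref{thm:rho-eta-connection} with its $q$ replaced by $q+1$ and its $r$ replaced by $n-1$ (so that the $\eta$-depth becomes $(n-1)+2=n+1$), turning this complete $\eta$-sum into $\sum_{|\bm\alpha|=n-1}\rho(\alpha_1+1,\dots,\alpha_{q+1}+1,\alpha_{q+2}+2)$. Finally, Theorem~\ref{thm:rho-sum} with $m=n-1$ and depth parameter $r=q+2$ evaluates this $\rho$-sum as $\zeta_n^{\star}(\{1\}^{q+1})/(n\cdot n!)=P_{q+1}(H_n^{(1)},\dots,H_n^{(q+1)})/(n\cdot n!)$, which is the asserted right-hand side. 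The parameter ranges are compatible: $n\ge 1$ forces $n-1\ge 0$, while $q\ge 0$ forces $q+2\ge 1$ and $q+1\ge 0$, so both invoked theorems apply verbatim.

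I expect the main obstacle to be the bookkeeping in the first step, namely checking that the map $\bm\beta\mapsto(r,s,\bm\alpha)$ is a genuine weight- and depth-preserving bijection with every coefficient equal to $1$: one must confirm both that each admissible index is produced exactly once (existence and uniqueness of the last part $\ge 2$) and that the recovered data satisfy $|\bm\alpha|=q$ and $r+s=n$. Once this reindexing is justified, the remaining two steps are immediate specializations of results already established, so no new analytic estimate is required.
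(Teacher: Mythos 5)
Your proof is correct, but it takes a genuinely different route from the paper. The paper proves Theorem~\ref{thm.32} by a self-contained integral computation: it writes down the double integral $\frac{1}{n!\,q!}\int_{0<t_1<t_2<1}(\log\frac{t_2}{t_1})^q(1-t_1)^n\frac{dt_1\,dt_2}{(1-t_1)t_2}$, expands $(1-t_1)^n=\sum_{r+s=n}\binom{n}{r}(t_2-t_1)^r(1-t_2)^s$ to recover the left-hand side, then evaluates the same integral a second way as the finite alternating sum $\frac{1}{n!}\sum_{k=0}^{n-1}\binom{n-1}{k}\frac{(-1)^k}{(k+1)^{q+2}}$ and invokes the known identity~\eqref{eq.chen} to reach the Bell-polynomial form. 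You instead bypass any new integral work: your bijection (grouping depth-$(n+1)$, weight-$(q+n+2)$ compositions by the position of the last part $\ge 2$) identifies the left-hand side with the complete sum $\sum_{|\bm s|=q+1}\eta(s_1+1,\ldots,s_{n+1}+1)$, after which Theorem~\ref{thm:rho-eta-connection} (with $q\mapsto q+1$, $r\mapsto n-1$) and Theorem~\ref{thm:rho-sum} (with $m=n-1$, $r=q+2$) finish the job; I checked the bijection and the parameter substitutions and they are all valid, with no circularity since both cited theorems are established independently of Theorem~\ref{thm.32}. Notably, the paper's remark following the theorem records exactly this chain of identities in the reverse direction and explicitly concedes that the reindexing ``can also be verified directly by rearranging the summation variables,'' so your argument is essentially that remark promoted to a proof. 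The trade-off: the paper's computation is longer but yields the integral representation of Corollary~\ref{cor.34} as a by-product, whereas your route is shorter and purely combinatorial but would need the first half of the paper's integral argument if one also wanted that corollary.
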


\begin{proof}
Consider the integral
\[
\frac{1}{n!\,q!}
\int_{0<t_1<t_2<1}
\left(\log\frac{t_2}{t_1}\right)^q
(1-t_1)^n
\frac{dt_1\,dt_2}{(1-t_1)t_2}.
\]
Expanding the factor $(1-t_1)^n$ gives
\[
(1-t_1)^n
=\sum_{r+s=n}\binom{n}{r}(t_2-t_1)^r(1-t_2)^s.
\]
Substituting this expansion into the integral yields
\[
\sum_{r+s=n\atop |\bm\alpha|=q}
\eta(\alpha_1+1,\ldots,\alpha_r+1,\alpha_{r+1}+2,\{1\}^s).
\]

On the other hand, the same integral admits an alternative representation
\[
\frac{1}{n!}
\int_{E_{q+2}}
(1-t_1)^{n-1}\,dt_1
\frac{du_1}{u_1}\cdots \frac{du_q}{u_q}\frac{dt_2}{t_2},
\]
where $E_{q+2}\subset\mathbb{R}^{q+2}$ denotes the $(q+2)$-dimensional ordered simplex
\[
E_{q+2}=\{(t_1,u_1,\ldots,u_q,t_2)\mid 0<t_1<u_1<\cdots<u_q<t_2<1\}.
\]
Evaluating this iterated integral gives
\[
\frac{1}{n!}\sum_{k=0}^{n-1}\binom{n-1}{k}\frac{(-1)^k}{(k+1)^{q+2}}.
\]
Using \cite[Proposition~3.2]{BatirC2019},
\begin{equation}\label{eq.chen}
\sum_{k=0}^n\binom{n}{k}\frac{(-1)^k}{(k+1)^m}
=\frac{P_{m-1}(H_{n+1}^{(1)},\ldots,H_{n+1}^{(m-1)})}{n+1},
\end{equation}
we rewrite the sum as
\[
\frac{P_{q+1}(H_n^{(1)},\ldots,H_n^{(q+1)})}{n\cdot n!},
\]
which completes the proof.
\end{proof}

\medskip

This identity leads directly to a family of elegant summation formulas.
For example:
\begin{itemize}
\item When $q=0$,
\[
\sum_{r+s=n}\eta(\{1\}^r,2,\{1\}^s)
=\frac{H_n}{n\cdot n!}.
\]
\item When $q=1$,
\[
\sum_{r+s=n}\eta(\{1\}^r,3,\{1\}^s)
+\sum_{a+b+s=n-1}\eta(\{1\}^a,2,\{1\}^b,2,\{1\}^s)
=\frac{H_n^2+H_n^{(2)}}{2n\cdot n!}.
\]
\end{itemize}
These examples illustrate how the integral approach systematically produces 
finite combinatorial identities linking multiple $\eta$-values to harmonic-number 
polynomials.

\begin{remark}
After the proof of Theorem~\ref{thm.32}, we observe an intriguing relation among the quantities involved. 
Theorem~\ref{thm.32} shows that 
\[
\sum_{r+s=n\atop |\bm\alpha|=q}
\eta(\alpha_1+1,\ldots,\alpha_r+1,\alpha_{r+1}+2,\{1\}^s)
=\frac{P_{q+1}(H_n^{(1)},\ldots,H_n^{(q+1)})}{n\cdot n!}.
\] 
Moreover, by Theorem~\ref{thm:rho-sum} this common value equals 
\[
\sum_{|\bm{s}|=n-1}\rho(s_1+1,s_2+1,\ldots,s_{q+1}+1,s_{q+2}+2)
\] 
and by Theorem~\ref{thm:rho-eta-connection} we have 
\[
\sum_{|\bm{s}|=n-1}
\rho(s_1+1,s_2+1,\ldots,s_{q+1}+1,s_{q+2}+2)
=\sum_{|\bm{\alpha}|=q+1}
\eta(\alpha_1+1,\alpha_2+1,\ldots,\alpha_{n+1}+1).
\] 
hence 
\[
\sum_{r+s=n\atop |\bm\alpha|=q}
\eta(\alpha_1+1,\ldots,\alpha_r+1,\alpha_{r+1}+2,\{1\}^s) 
= \sum_{|\bm{\alpha}|=q+1}
\eta(\alpha_1+1,\alpha_2+1,\ldots,\alpha_{n+1}+1).
\] 
Although this equality can also be verified directly by rearranging the summation variables, 
this chain of identities highlights a close correspondence between the \(\rho\)-values and the \(\eta\)-values.
\end{remark}

\begin{corollary}\label{cor.34}
For nonnegative integers $n$ and $q$, 
the sum $\sum_{|\bm{\alpha}|=q+1}\eta(\alpha_1+1,\ldots,\alpha_{n+1}+1)$ 
admits the integral representation given in Equation~\eqref{eq.int}.
\end{corollary}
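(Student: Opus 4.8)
The plan is to derive the corollary \emph{directly} from the integral representation of the depth-$(n+1)$ multiple $\eta$-values displayed just before Theorem~\ref{thm:rho-eta-connection}, rather than only tracking the chain of identities recorded in the preceding remark. Taking $r=n+1$ there, I would first write each summand as
\[
\eta(\alpha_1+1,\ldots,\alpha_{n+1}+1)
=\frac{1}{\alpha_1!\cdots\alpha_{n+1}!}
\int_{E_{n+1}}
\prod_{i=1}^{n}\Bigl(\log\tfrac{t_{i+1}}{t_i}\Bigr)^{\alpha_i}
\Bigl(\log\tfrac{1}{t_{n+1}}\Bigr)^{\alpha_{n+1}}
\frac{dt_1\cdots dt_{n+1}}{1-t_1},
\]
and then interchange the finite sum over $|\bm\alpha|=q+1$ with the integral, which is legitimate since each integral converges absolutely and the index set is finite.

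The key algebraic step is a multinomial collapse. Writing $L_i=\log\tfrac{t_{i+1}}{t_i}$ for $1\le i\le n$ and $L_{n+1}=\log\tfrac{1}{t_{n+1}}$, the multinomial theorem gives
\[
\sum_{|\bm\alpha|=q+1}
\frac{1}{\alpha_1!\cdots\alpha_{n+1}!}
\prod_{i=1}^{n+1}L_i^{\alpha_i}
=\frac{1}{(q+1)!}\Bigl(\sum_{i=1}^{n+1}L_i\Bigr)^{q+1}.
\]
The crucial observation is that the logarithms telescope, $\sum_{i=1}^{n+1}L_i=\log\tfrac{t_{n+1}}{t_1}+\log\tfrac{1}{t_{n+1}}=\log\tfrac{1}{t_1}$, so the entire integrand depends on $t_1$ alone and the sum becomes
\[
\frac{1}{(q+1)!}\int_{E_{n+1}}
\Bigl(\log\tfrac{1}{t_1}\Bigr)^{q+1}
\frac{dt_1\cdots dt_{n+1}}{1-t_1}.
\]

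Next I would integrate out the inner variables $t_2,\ldots,t_{n+1}$; since the integrand no longer involves them, their contribution is just the volume of the simplex $t_1<t_2<\cdots<t_{n+1}<1$, namely $(1-t_1)^n/n!$, reducing the expression to the single integral
\[
\frac{1}{(q+1)!\,n!}\int_0^1
\Bigl(\log\tfrac{1}{t_1}\Bigr)^{q+1}
(1-t_1)^{n-1}\,dt_1.
\]
Finally, to match the stated two-dimensional form I would reintroduce the extra variable through the elementary identity $\frac{1}{q+1}\bigl(\log(1/t_1)\bigr)^{q+1}=\int_{t_1}^1\bigl(\log\tfrac{t_2}{t_1}\bigr)^q\,\tfrac{dt_2}{t_2}$ (substitute $u=\log(t_2/t_1)$); inserting this and rewriting $(1-t_1)^{n-1}=(1-t_1)^n/(1-t_1)$ recovers Equation~\eqref{eq.int} exactly.

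The computation is essentially routine, and no step is a serious obstacle; the only points requiring genuine attention are the telescoping identity—which is precisely where the rigid shifted-denominator structure of $\eta$ dissolves into a single logarithm—and the boundary analysis guaranteeing convergence of the reduced integral near $t_1=1$ (where the integrand behaves like $(1-t_1)^{q+n}$) and near $t_1=0$, so that all interchanges are justified. I would also remark that the same identity follows more quickly by combining Theorem~\ref{thm.32}, whose proof already evaluates the integral in~\eqref{eq.int}, with the rearrangement recorded in the remark; the direct argument above, however, is self-contained and exhibits precisely how the integral representation of the $\eta$-sum arises.
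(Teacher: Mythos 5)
Your argument is correct, and it takes a genuinely different route from the paper's. The paper obtains Corollary~\ref{cor.34} by chaining earlier results: the proof of Theorem~\ref{thm.32} evaluates the integral in Equation~\eqref{eq.int} as $\sum_{r+s=n,\,|\bm\alpha|=q}\eta(\alpha_1+1,\ldots,\alpha_r+1,\alpha_{r+1}+2,\{1\}^s)$ and as a Bell-polynomial expression, and the Remark preceding the corollary then identifies that quantity with $\sum_{|\bm\alpha|=q+1}\eta(\alpha_1+1,\ldots,\alpha_{n+1}+1)$ by passing through the $\rho$-family via Theorem~\ref{thm:rho-sum} and Theorem~\ref{thm:rho-eta-connection}. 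You instead work entirely inside the $\eta$-family: you sum the depth-$(n+1)$ iterated-integral representation over $|\bm\alpha|=q+1$, collapse the logarithms by the multinomial theorem using the telescoping identity $\sum_i L_i=\log(1/t_1)$ (which is indeed the decisive step), integrate out the $n$ free simplex variables to produce the factor $(1-t_1)^n/n!$, and re-expand one variable to recover the two-dimensional form of Equation~\eqref{eq.int}. Each computation checks out, including the boundary estimate $(1-t_1)^{q+n}$ near $t_1=1$ and the degenerate case $n=0$ (where the paper's chain, which invokes Theorem~\ref{thm.32} with $n\ge 1$, is actually less immediate). Your derivation is self-contained and avoids the detour through $\rho$-values and the finite zeta-star/Bell-polynomial identity~\eqref{eq.chen}; the paper's chain, on the other hand, yields the closed-form value $P_{q+1}(H_n^{(1)},\ldots,H_n^{(q+1)})/(n\cdot n!)$ of the common quantity as a by-product, which your computation does not provide. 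Both are valid proofs of the stated corollary.
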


\medskip
By applying the same integral technique as in Theorem~\ref{thm.32},
but replacing the factor $\tfrac{dt_1\,dt_2}{(1-t_1)t_2}$ by 
$\tfrac{dt_1\,dt_2}{1-t_1}$ in the integrand, 
we obtain a weighted variant of the preceding identity.
This yields the following proposition.

\begin{proposition}\label{prop.35}
For integers $n\geq 1$ and $q\geq 0$, we have
\[
\sum_{r+s=n\atop |\bm\alpha|=q}
\eta(\alpha_1+1,\ldots,\alpha_{r+1}+1,\{1\}^{s+1})
=\frac{(-1)^{q+1}}{n\,(n+1)!}
+\frac{1}{n\cdot n!}
\sum_{k=0}^q(-1)^{q-k}
P_k(H_n^{(1)},\ldots,H_n^{(k)}).
\]
\end{proposition}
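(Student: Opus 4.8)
The plan is to follow the hint and mirror the proof of Theorem~\ref{thm.32}, replacing the measure $\frac{dt_1\,dt_2}{(1-t_1)t_2}$ by $\frac{dt_1\,dt_2}{1-t_1}$. The first task is to show that the left-hand side equals the integral
\[
I=\frac{1}{n!\,q!}\int_{0<t_1<t_2<1}\left(\log\frac{t_2}{t_1}\right)^q(1-t_1)^{n-1}\,dt_1\,dt_2.
\]
To establish this I would run the identification of Theorem~\ref{thm.32} in reverse: for fixed $r,s$ with $r+s=n$, I sum the integral representation~\eqref{eq.eta-int} of $\eta(\alpha_1+1,\ldots,\alpha_{r+1}+1,\{1\}^{s+1})$ over all $|\bm\alpha|=q$. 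Since every one of the first $r+1$ entries is unboosted, the multinomial theorem collapses $\sum_{|\bm\alpha|=q}\frac{1}{\alpha_1!\cdots\alpha_{r+1}!}\prod_{i}\bigl(\log\frac{x_{i+1}}{x_i}\bigr)^{\alpha_i}$ cleanly to $\frac{1}{q!}\bigl(\log\frac{x_{r+2}}{x_1}\bigr)^q$; integrating out the $r$ interior variables $x_2,\ldots,x_{r+1}$ and the $s$ trailing variables $x_{r+3},\ldots,x_{n+2}$ against their plain measures produces the factors $\frac{(t_2-t_1)^r}{r!}$ and $\frac{(1-t_2)^s}{s!}$ (with $t_1=x_1$, $t_2=x_{r+2}$), while $t_2$ now carries the plain measure $dt_2$. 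Finally, summing over $r+s=n$ and using $\sum_{r+s=n}\frac{(t_2-t_1)^r(1-t_2)^s}{r!\,s!}=\frac{(1-t_1)^n}{n!}$ recovers $I$.

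Next I would refine $\frac{1}{q!}\bigl(\log\frac{t_2}{t_1}\bigr)^q$ into an iterated integral over $q$ interior variables $t_1<u_1<\cdots<u_q<t_2$, each with measure $\frac{du_i}{u_i}$, turning $I$ into
\[
I=\frac{1}{n!}\int_{0<t_1<u_1<\cdots<u_q<t_2<1}(1-t_1)^{n-1}\,dt_1\,\frac{du_1}{u_1}\cdots\frac{du_q}{u_q}\,dt_2,
\]
and evaluate it from the inside out. Expanding $(1-t_1)^{n-1}=\sum_{k=0}^{n-1}\binom{n-1}{k}(-1)^k t_1^k$, integrating $t_1$ and then the $q$ variables $u_1,\ldots,u_q$ in turn (each of these $q+1$ integrations contributing a factor $\frac{1}{k+1}$), and finally integrating $t_2$ against the plain measure $dt_2$—which contributes $\frac{1}{k+2}$ in place of the $\frac{1}{k+1}$ appearing in Theorem~\ref{thm.32}—gives
\[
I=\frac{1}{n!}\sum_{k=0}^{n-1}\binom{n-1}{k}\frac{(-1)^k}{(k+1)^{q+1}(k+2)}.
\]

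The new ingredient, and the step I expect to demand the most care, is the disentangling of $\frac{1}{(k+1)^{q+1}(k+2)}$. Writing $w=k+1$ and expanding $\frac{1}{1+w}$ as a geometric series yields the partial-fraction decomposition
\[
\frac{1}{(k+1)^{q+1}(k+2)}=\sum_{j=1}^{q+1}\frac{(-1)^{q+1-j}}{(k+1)^j}+\frac{(-1)^{q+1}}{k+2}.
\]
Applying~\eqref{eq.chen} with $n$ replaced by $n-1$, namely $\sum_{k=0}^{n-1}\binom{n-1}{k}\frac{(-1)^k}{(k+1)^j}=\frac{1}{n}P_{j-1}(H_n^{(1)},\ldots,H_n^{(j-1)})$, and reindexing $k=j-1$, the first group yields the main term $\frac{1}{n}\sum_{k=0}^{q}(-1)^{q-k}P_k(H_n^{(1)},\ldots,H_n^{(k)})$. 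For the residual piece I would use the Beta integral
\[
\sum_{k=0}^{n-1}\binom{n-1}{k}\frac{(-1)^k}{k+2}=\int_0^1 x(1-x)^{n-1}\,dx=\frac{1}{n(n+1)},
\]
so that the $\frac{1}{k+2}$ part contributes $\frac{(-1)^{q+1}}{n(n+1)}$. Multiplying through by the prefactor $\frac{1}{n!}$ and using $\frac{1}{n(n+1)\,n!}=\frac{1}{n\,(n+1)!}$ then assembles the two contributions into the stated closed form. Beyond the bookkeeping of signs in the reindexing, the only genuinely new analytic point—compared with Theorem~\ref{thm.32}—is recognizing that the plain $dt_2$ measure produces the extra factor $\frac{1}{k+2}$, whose partial-fraction residue at $k=-2$ is precisely the source of the correction term $\frac{(-1)^{q+1}}{n\,(n+1)!}$.
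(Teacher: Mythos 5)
Your proposal is correct and follows essentially the same route as the paper's (sketched) proof: the same starting integral $\tfrac{1}{n!\,q!}\int(\log\tfrac{t_2}{t_1})^q(1-t_1)^{n-1}\,dt_1\,dt_2$, the same reduction to $\tfrac{1}{n!}\sum_{k}\binom{n-1}{k}\tfrac{(-1)^k}{(k+1)^{q+1}(k+2)}$, and the identical partial-fraction decomposition combined with Equation~\eqref{eq.chen}. You merely supply details the paper leaves implicit (the multinomial collapse identifying the $\eta$-sum with the integral, and the Beta-integral evaluation of the $\tfrac{1}{k+2}$ residue), and these are all accurate.
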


\begin{proof}[Sketch of proof]
Starting from the integral
\[
\frac{1}{n!\,q!}
\int_{0<t_1<t_2<1}
\left(\log\frac{t_2}{t_1}\right)^q(1-t_1)^n
\frac{dt_1\,dt_2}{1-t_1},
\]
and following the same expansion as in the proof of 
Theorem~\ref{thm.32}, one obtains
\[
\frac{1}{n!}\sum_{k=0}^{n-1}
\binom{n-1}{k}\frac{(-1)^k}{(k+1)^{q+1}(k+2)}.
\]
Decomposing the rational factor as
\[
\frac{1}{(k+1)^{q+1}(k+2)}
=\frac{(-1)^{q+1}}{k+2}
+\sum_{\ell=0}^q
\frac{(-1)^{q-\ell}}{(k+1)^{\ell+1}},
\]
and using Equation~\eqref{eq.chen}, 
the stated formula follows after straightforward simplification.
\end{proof}

\medskip

This result leads to several weighted summation identities.  
For example:
\begin{itemize}
\item When $q=1$,
\begin{equation}\label{eq.w121}
\sum_{a+b=n}(b+1)\,
\eta(\{1\}^a,2,\{1\}^{b+1})
=\frac{(n+1)H_n-n}{n\cdot (n+1)!}.
\end{equation}
\item When $q=2$,
\begin{align}\label{eq.w122}
&\sum_{a+b=n}(b+1)\,
\eta(\{1\}^a,3,\{1\}^{b+1})
+\sum_{a+b+c=n-1}(c+1)\,
\eta(\{1\}^a,2,\{1\}^b,2,\{1\}^{c+1}) \nonumber\\
&\qquad
=\frac{2n+(n+1)\bigl(H_n^2-2H_n+H_n^{(2)}\bigr)}{2n\cdot (n+1)!}.
\end{align}
\item When $n=1$,
\begin{equation}\label{eq.38}
\sum_{\alpha_1+\alpha_2=q}\eta(\alpha_1+1,\alpha_2+1,1)+\eta(q+1,1,1)=\frac12.
\end{equation}
\end{itemize}

These weighted formulas further illustrate the analytic flexibility 
of the integral approach, showing that even when the summands are 
asymmetrically weighted by depth or position, 
the total sum remains expressible in closed form 
through harmonic numbers and Bell-type polynomials.

\medskip
The results obtained in this section demonstrate that 
the multiple $\eta$-values admit a rich family of summation identities 
that can be derived systematically through integral transformations.  
By linking the multiple $\rho$- and $\eta$-functions within the same 
integral framework, we established a precise correspondence between 
their respective sum formulas, allowing the evaluation of otherwise 
intractable $\eta$-sums in terms of harmonic numbers and 
Bell-type polynomials.  

These identities not only reveal the combinatorial nature of the 
multiple $\eta$-function but also indicate a deeper parallelism between 
zeta-like series defined by products of rising factorials 
and those defined by shifted powers.  
This structural connection underlies the coincidence phenomena 
highlighted in the concluding remarks.

\medskip
To conclude this section, we present several explicit evaluations of 
$\eta(\alpha_1+1,\ldots,\alpha_r+1)$ for small total weights, 
illustrating the structural resemblance between $\eta$- and $\rho$-values 
established above. 
These results not only provide concrete verification of the preceding 
integral relations but also reveal parallel patterns to the multiple 
$\rho$-values tabulated earlier.

\begin{center}
\captionof{table}{Explicit evaluations of $\eta(a_1,a_2,\ldots,a_r)$ for weights $2\le w\le6$.}
\end{center}

\setlength{\columnseprule}{0.4pt} 
\setlength{\columnsep}{20pt}      
\begin{multicols}{2}
\TableHeader
\Row{2}{$\left(2\right)$}{$\frac{\pi ^2}{6}$}
\Row{2}{$\left(1,\,1\right)$}{$1$}
\Row{3}{$\left(3\right)$}{$\zeta (3)$}
\Row{3}{$\left(1,\,2\right)$}{$2-\frac{\pi ^2}{6}$}
\Row{3}{$\left(2,\,1\right)$}{$\frac{\pi ^2}{6}\allowbreak{}-\,1$}
\Row{3}{$\left(1,\,1,\,1\right)$}{$\frac{1}{4}$}
\Row{4}{$\left(4\right)$}{$\frac{\pi ^4}{90}$}
\Row{4}{$\left(1,\,3\right)$}{$-\zeta (3)+3-\frac{\pi ^2}{6}$}
\Row{4}{$\left(2,\,2\right)$}{$\frac{\pi ^2}{3}\allowbreak{}-\,3$}
\Row{4}{$\left(3,\,1\right)$}{$\zeta (3)+1-\frac{\pi ^2}{6}$}
\Row{4}{$\left(1,\,1,\,2\right)$}{$\frac{\pi ^2}{12}\allowbreak{}-\,\frac{3}{4}$}
\Row{4}{$\left(1,\,2,\,1\right)$}{$\frac{7}{4}-\frac{\pi ^2}{6}$}
\Row{4}{$\left(2,\,1,\,1\right)$}{$\frac{\pi ^2}{12}-\frac{5}{8}$}
\TableHeader
\Row{4}{$\left(1,\,1,\,1,\,1\right)$}{$\frac{1}{18}$}
\Row{5}{$\left(5\right)$}{$\zeta (5)$}
\Row{5}{$\left(1,\,4\right)$}{$-\zeta (3)+4-\frac{\pi ^2}{6}-\frac{\pi ^4}{90}$}
\Row{5}{$\left(2,\,3\right)$}{$\zeta (3)-6+\frac{\pi ^2}{2}$}
\Row{5}{$\left(3,\,2\right)$}{$\zeta (3)+4-\frac{\pi ^2}{2}$}
\Row{5}{$\left(4,\,1\right)$}{$-\zeta (3)-1+\frac{\pi ^2}{6}+\frac{\pi ^4}{90}$}
\Row{5}{$\left(1,\,1,\,3\right)$}{$\frac{1}{2}\,\zeta (3)\allowbreak{}-\,\frac{29}{16}\allowbreak{}+\,\frac{\pi ^2}{8}$}
\Row{5}{$\left(1,\,2,\,2\right)$}{$\frac{5}{2}-\frac{\pi ^2}{4}$}
\Row{5}{$\left(1,\,3,\,1\right)$}{$\frac{5}{4}-\zeta (3)$}
\Row{5}{$\left(2,\,1,\,2\right)$}{$\frac{1}{16}$}
\Row{5}{$\left(2,\,2,\,1\right)$}{$\frac{\pi ^2}{4}\,\allowbreak{}-\,\frac{19}{8}$}
\Row{5}{$\left(3,\,1,\,1\right)$}{$\frac{1}{2}\,\zeta (3)\allowbreak{}+\,\frac{13}{16}\allowbreak{}-\,\frac{\pi ^2}{8}$}
\Row{5}{$\left(1,\,1,\,1,\,2\right)$}{$\frac{31}{108}\allowbreak{}-\,\frac{\pi ^2}{36}$}
\TableHeader
\Row{5}{$\left(1,\,1,\,2,\,1\right)$}{$\frac{\pi ^2}{12}\allowbreak{}-\,\frac{29}{36}$}
\Row{5}{$\left(1,\,2,\,1,\,1\right)$}{$\frac{61}{72}\allowbreak{}-\,\frac{\pi ^2}{12}$}
\Row{5}{$\left(2,\,1,\,1,\,1\right)$}{$\frac{\pi ^2}{36}\allowbreak{}-\,\frac{49}{216}$}
\Row{5}{$\left(1,\,1,\,1,\,1,\,1\right)$}{$\frac{1}{96}$}
\Row{6}{$\left(6\right)$}{$\frac{\pi ^6}{945}$}
\Row{6}{$\left(1,\,5\right)$}{$-\zeta (3)-\zeta (5)+5-\frac{\pi ^2}{6}-\frac{\pi ^4}{90}$}
\Row{6}{$\left(2,\,4\right)$}{$2 \zeta (3)-10+\frac{2 \pi ^2}{3}+\frac{\pi ^4}{90}$}
\Row{6}{$\left(3,\,3\right)$}{$10-\pi ^2$}
\Row{6}{$\left(4,\,2\right)$}{$-2\,\zeta (3)-5+\frac{2 \pi ^2}{3}+\frac{\pi ^4}{90}$}
\Row{6}{$\left(5,\,1\right)$}{$\zeta (3)+\zeta (5)+1-\frac{\pi ^2}{6}-\frac{\pi ^4}{90}$}
\Row{6}{$\left(1,\,1,\,4\right)$}{$\frac{3}{4}\,\zeta (3)-\frac{23}{8}+\frac{7 \pi ^2}{48}+\frac{\pi ^4}{180}$}
\Row{6}{$\left(1,\,2,\,3\right)$}{$-\,\frac{1}{2}\,\zeta (3)\allowbreak{}+\,\frac{69}{16}\allowbreak{}-\,\frac{3}{8}\,\pi ^2$}
\Row{6}{$\left(1,\,3,\,2\right)$}{$-\zeta (3)\allowbreak{}-\,\frac{5}{4}\allowbreak{}+\,\frac{\pi ^2}{4}$}
\Row{6}{$\left(1,\,4,\,1\right)$}{$\frac{11}{4}-\frac{\pi ^2}{6}-\frac{\pi ^4}{90}$}
\Row{6}{$\left(2,\,1,\,3\right)$}{$-\,\frac{1}{4}\,\zeta (3)\allowbreak{}+\,\frac{15}{16}\allowbreak{}-\,\frac{\pi ^2}{16}$}
\Row{6}{$\left(2,\,2,\,2\right)$}{$\frac{\pi ^2}{4}\allowbreak{}-\,\frac{39}{16}$}
\Row{6}{$\left(2,\,3,\,1\right)$}{$\zeta (3)-\frac{29}{8}+\frac{\pi ^2}{4}$}
\Row{6}{$\left(3,\,1,\,2\right)$}{$\frac{1}{4}\,\zeta (3)\allowbreak{}+\,\frac{3}{8}\allowbreak{}-\,\frac{\pi ^2}{16}$}
\TableHeader
\Row{6}{$\left(3,\,2,\,1\right)$}{$\frac{1}{2}\,\zeta (3)\allowbreak{}+\,\frac{51}{16}\allowbreak{}-\,\frac{3\pi ^2}{8}$}
\Row{6}{$\left(4,\,1,\,1\right)$}{$-\frac{3}{4}\, \zeta (3)-\frac{29}{32}+\frac{7 \pi ^2}{48}+\frac{\pi ^4}{180}$}
\Row{6}{$\left(1,\,1,\,1,\,3\right)$}{$-\,\frac{1}{6}\,\zeta (3)\allowbreak{}+\,\frac{305}{432}\allowbreak{}-\,\frac{11\pi ^2}{216}$}
\Row{6}{$\left(1,\,1,\,2,\,2\right)$}{$\frac{\pi ^2}{9}\allowbreak{}-\,\frac{59}{54}$}
\Row{6}{$\left(1,\,1,\,3,\,1\right)$}{$\frac{1}{2}\,\zeta (3)\allowbreak{}-\,\frac{145}{144}\allowbreak{}+\,\frac{\pi ^2}{24}$}
\Row{6}{$\left(1,\,2,\,1,\,2\right)$}{$\frac{121}{432}\allowbreak{}-\,\frac{\pi ^2}{36}$}
\Row{6}{$\left(1,\,2,\,2,\,1\right)$}{$\frac{119}{72}-\frac{\pi ^2}{6}$}
\Row{6}{$\left(1,\,3,\,1,\,1\right)$}{$-\,\frac{1}{2}\,\zeta (3)\allowbreak{}+\,\frac{29}{144}\allowbreak{}+\,\frac{\pi ^2}{24}$}
\Row{6}{$\left(2,\,1,\,1,\,2\right)$}{$\frac{\pi ^2}{54}\allowbreak{}-\,\frac{37}{216}$}
\Row{6}{$\left(2,\,1,\,2,\,1\right)$}{$\frac{125}{432}\allowbreak{}-\,\frac{\pi ^2}{36}$}
\Row{6}{$\left(2,\,2,\,1,\,1\right)$}{$\frac{\pi ^2}{9}\allowbreak{}-\,\frac{29}{27}$}
\Row{6}{$\left(3,\,1,\,1,\,1\right)$}{$\frac{1}{6}\,\zeta (3)\allowbreak{}+\,\frac{449}{1296}\allowbreak{}-\,\frac{11 \pi ^2}{216}$}
\Row{6}{$\left(1,\,1,\,1,\,1,\,2\right)$}{$\frac{ \pi ^2}{144}\allowbreak{}-\,\frac{115}{1728}$}
\Row{6}{$\left(1,\,1,\,1,\,2,\,1\right)$}{$\frac{239}{864}\allowbreak{}-\,\frac{\pi ^2}{36}$}
\Row{6}{$\left(1,\,1,\,2,\,1,\,1\right)$}{$\frac{\pi ^2}{24}\allowbreak{}-\,\frac{235}{576}$}
\Row{6}{$\left(1,\,2,\,1,\,1,\,1\right)$}{$\frac{241}{864}\allowbreak{}-\,\frac{\pi ^2}{36}$}
\Row{6}{$\left(2,\,1,\,1,\,1,\,1\right)$}{$\frac{\pi ^2}{144}\allowbreak{}-\,\frac{205}{3456}$}
\Row{6}{$\left(1,\,1,\,1,\,1,\,1,\,1\right)$}{$\frac{1}{600}$}
\end{multicols}

\section{Concluding Remarks}

This paper has investigated two interrelated families of zeta-like multiple series:
the multiple $\rho$-function and the multiple $\eta$-function.
Both are defined by nested sums with shifted denominators,
and their structural correspondence has been clarified through
a combination of factorial, integral, and combinatorial analyses.

\medskip
\noindent
(1) For the multiple $\rho$-values, we established a closed factorial formula
(Proposition~\ref{prop.rho-value}) and a general summation relation connecting
$\rho$ to finite multiple zeta-star values (Theorem~\ref{thm:rho-sum}).
These results reveal the factorial structure intrinsic to the $\rho$-family
and serve as the foundation for various weighted and unweighted sum identities.
A notable outcome is the combinatorial identity
(Equation~\eqref{eq.qn-any}), which holds for all $q,n\ge0$
and expresses a perfect balance among discrete compositions of~$n$.
This identity captures the intrinsic combinatorial pattern underlying the $\rho$-family, 
reflecting the discrete structure implicit in its weighted-sum relations.

\medskip
\noindent
(2) For the multiple $\eta$-values, an integral framework was developed
that links their summations directly to those of $\rho$
through a precise change of variables.
Both families share the same total weight $q+r+1$,
with depths $\ell(\bm{s})=r+1$ and $\ell(\bm{\alpha})=q+1$, respectively.
Their integral correspondence (Corollary \ref{cor.34}), derived from Theorem \ref{thm:rho-eta-connection}, provides a continuous analogue of the discrete structure underlying $\rho$.
Furthermore, several weighted sum relations for $\eta$ are obtained independently through Proposition \ref{prop.35}, which yields the identities in Equation~\eqref{eq.w121} and Equation~\eqref{eq.w122}.
These results demonstrate how factorial and harmonic formulations
emerge as two aspects of a unified analytic–combinatorial system.

\medskip
\noindent
(3) Following the approach of~\cite{Chen202010},
the explicit evaluation of $\eta(p,\{1\}^a)$ is obtained as
\begin{align*}
a!\cdot \eta(p,\{1\}^a)
 &= \sum_{n=1}^\infty \frac{1}{n^p \binom{n+a}{a}}\\
 &= \sum_{k=0}^{p-2} (-1)^k \zeta(p-k)
     P_k(H_a^{(1)},\ldots,H_a^{(k)})
     + \sum_{k=0}^{a-1} \binom{a}{k+1}
       \frac{(-1)^{p+k-1}}{(k+1)^{p-1}} H_{k+1}.
\end{align*}
Substituting this expression into Equation~\eqref{eq.38}
yields a restricted sum formula for
\begin{align*}
\sum_{\alpha_1+\alpha_2=q}
   \eta(\alpha_1+1,\alpha_2+1,1)
   &=(-1)^{q+1}+\frac{1}{2}
     +\frac{(-1)^q\!\cdot\!3}{2^{q+2}}  \\
   &\quad
     +\sum_{k=0}^{q-1}(-1)^{k+1}
      \!\left(1-\frac{1}{2^{k+1}}\right)\!
      \zeta(q+1-k),
\end{align*}
which gives an explicit evaluation for this special family of triple $\eta$-values.
This result refines the analytic evaluation of $\eta$-values obtained in Section 3,
illustrating how explicit analytic expressions can lead to restricted combinatorial sum formulas.

\medskip
\noindent
(4) Conceptually, the integral method introduced in this work reveals
a duality between the iterated-sum form of $\rho$-values
and the shifted-denominator form of $\eta$-values.
Together they constitute a coherent framework for zeta-like multiple series,
bridging factorial and harmonic perspectives within a unified analytic scheme.

\medskip
\noindent
The techniques developed here open several directions for future research.
Potential extensions include the study of $q$-analogues and Hurwitz-type
deformations of $\rho$ and $\eta$, their connections with Schur multiple
zeta functions, and the algebraic relations arising from shuffle and stuffle
structures among these generalized series.

\medskip
\noindent
In conclusion, the multiple $\rho$- and $\eta$-values
represent two complementary realizations of a deeper combinatorial symmetry
governing zeta-like multiple sums.
Their interplay through integral representations provides
a unified analytic–combinatorial perspective
and lays a promising foundation for further investigation
into the algebraic and analytic properties of multiple zeta-type functions.

\titleformat{\section}
{\sffamily\color{sectitlecolor}\Large\bfseries\filcenter}{}{2em}{#1}%

\subsection*{Acknowledgement.}
The author would like to express his sincere gratitude to Professor Minking Eie for valuable discussions and inspiring comments during the preparation of this work.

\medskip
\subsection*{Funding.}
This research was funded by the National Science and Technology Council, 
Taiwan, R. O. C., under grant number NSTC 114-2115-M-845-001.

\medskip
\subsection*{Conflicts of interest.}
The author declares no conflict of interest.

\medskip
\subsection*{Data Availability Statement.}
The data supporting the findings of this study are contained within the article. No additional data are available.

\medskip
\subsection*{Use of AI Tools.}
The author used OpenAI’s ChatGPT to assist in language editing and improving the clarity of the manuscript. All mathematical content, proofs, and results are entirely the author's own work.

\end{document}